\documentclass[a4paper,10pt]{amsart}

\usepackage[american]{babel} 
\usepackage[latin1]{inputenc}        
\usepackage{amsmath}
\usepackage{amsthm}
\usepackage{amssymb}
\usepackage{mathtools}
\usepackage{color,graphicx}  
\usepackage{enumitem}   
\usepackage{pstricks-add}
\usepackage{todonotes}
\usepackage{pgf,tikz,pgfplots}
\usetikzlibrary{patterns}
\usetikzlibrary{intersections}
\usepackage{bbm}
\usetikzlibrary{svg.path}
\usetikzlibrary{decorations.pathmorphing}
\usetikzlibrary{hobby,decorations.markings}
\usepackage{mathrsfs}
\usepackage{ifthen}
\usepackage{geometry}
\usepackage{xcolor}
\usepackage{physics}
\usetikzlibrary{svg.path}
\usetikzlibrary{hobby,decorations.markings}

\newtheorem{theo}{Theorem}[section]
\newtheorem{lemma}{Lemma}[section]

\newtheorem{remark}{Remark}[section]

\pgfplotsset{compat=1.16}

\begin{document}

\title{Phase transition for percolation on a randomly stretched square lattice}

\author[Hil\'ario]{Marcelo~R.~Hil\'ario}\address{Universidade Federal de Minas Gerais}\email{mhilario@mat.ufmg.br\\rsanchis@mat.ufmg.br}
\author[S\'a]{Marcos~S\'a}\address{Instituto Nacional de Matem\'atica Pura e Aplicada}\email{marcospy6@ufmg.br\\augusto@impa.br}
\author[Sanchis]{R\'emy~Sanchis}
\author[Teixeira]{Augusto~Teixeira}
\date{}

\begin{abstract}
Let $\{\xi_i\}_{i \geq 1}$ be a sequence of i.i.d.\ positive random variables.
Starting from the usual square lattice replace each horizontal edge that links a site in $i$-th vertical column to another in the $(i+1)$-th vertical column by an edge having length $\xi_i$.
Then declare independently each edge $e$ in the resulting lattice open with probability $p_e=p^\abs{e}$ where $p\in[0,1]$ and $\abs{e}$ is the length of $e$.
We relate the occurrence of nontrivial phase transition for this model to moment properties of $\xi_1$. 
More precisely, we prove that the model undergoes a nontrivial phase transition when $\mathbb{E}(\xi_1^\eta)<\infty$, for some $\eta>1$ whereas, when $\mathbb{E}(\xi_1^\eta)=\infty$ for some $\eta<1$, no phase transition occurs.
\end{abstract}

\maketitle

\section{Introduction}
\label{s:introduction}

In this paper we discuss the existence of phase transition for a percolation model defined on a stretched version of the square lattice where horizontal edges are assigned random lengths while vertical edges remain with length one.
\subsection{Setting and main results}

We start by describing the lattice.
Given an increasing sequence $\Lambda=\{x_0, x_1, \cdots\}\subseteq \mathbb{R}$ called environment define the graph $\mathcal{L}_\Lambda=\big(V(\mathcal{L}_\Lambda\big), E\big(\mathcal{L}_\Lambda)\big)$ whose vertex set $V(\mathcal{L}_\Lambda)$ and edge set $E(\mathcal{L}_\Lambda)$ are given by
\begin{eqnarray}
V(\mathcal{L}_\Lambda)&:=&\Lambda\times\mathbb{Z}_+=\big\{(x,y)\in \mathbb{R}^2;\, x\in\Lambda, y\in \mathbb{Z}_+\big\};
\nonumber\\
E(\mathcal{L}_\Lambda)&:=&\big\{\{(x_i, n), (x_j, m)\}\subseteq V(\mathcal{L}_\Lambda);\, \abs{i-j}+\abs{n-m}=1\big\}\nonumber.
\end{eqnarray} 
Roughly speaking, $\mathcal{L}_\Lambda$ is the lattice obtained from $\mathbb{Z}_+^2$ by stretching or contracting horizontal edges in such a way that the edges connecting sites in $i$-th and $(i+1)$-th column have the same length which is given by the respective difference of consecutive elements in $\Lambda$, namely $x_{i+1}-x_i$.
See Figure \ref{fig3_1}.

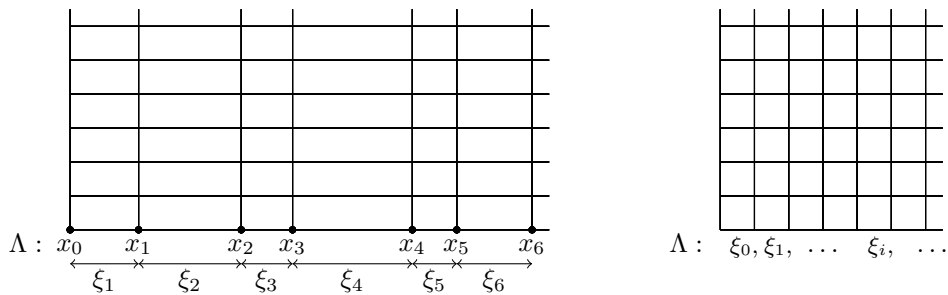
\begin{figure}[htb!]
	\centering
	\begin{tikzpicture}[scale=0.45]
	\foreach \y in {0,1,2,3,4,5,6} {
		\foreach\x in {0,2,5,6.5,10,11.3,13.5}	{
			\draw[ ] (\x,0) to (\x, 6.5);
			\draw[ ] (0,\y) to (14, \y);
			\fill (\x,0) circle(3pt);
	}}
	\def\numbers{{0,2,5,6.5,10,11.3,13.5}}
	   \foreach \i in {1,...,6}{
	    \pgfmathsetmacro{\n}{\numbers[\i-1]}
	     \pgfmathsetmacro{\m}{\numbers[\i]}
	     \draw[<->] (\n,-1) -- (\m,-1);
	     \node at (\n*.5+\m*.5,-1.5) {$\xi_{\i}$};
	     
	     }
	\node at (0,-0.5) {$x_0$};
	\node at (2,-0.5) {$x_1$};
	\node at (5,-0.5) {$x_2$};
	\node at (6.5,-0.5) {$x_3$};
	\node at (10,-0.5) {$x_4$};
	\node at (11.3,-0.5) {$x_5$};
	\node at (13.5,-0.5) {$x_6$};
	\node at (-1.3,-0.4) {$\Lambda:$};
	
	\foreach \y in {0,1,2,3,4,5,6} {
		\foreach\x in {0,1,2,3,4,5,6}	{
			\draw[ ] (\x+19,0) to (\x+19, 6.5);
			\draw[ ] (0+19,\y) to (6.5+19, \y);
	}}
	\node at (19.7,-0.5) {\small{${\xi_0},$}};
	\node at (20.7,-0.5) {\small{${\xi_1},$}};
	\node at (22,-0.6) {$\ldots$};
	\node at (23.7,-0.5) {\small{${\xi_i},$}};
	\node at (25.2,-0.6) {$\ldots$};
	\node at (18,-0.4) {$\Lambda:$};
	\end{tikzpicture}
	\caption{We illustrate the lattice $\mathcal{L}_\Lambda$ (on the left) and the alternative formulation on $\mathbb{Z}^2_+$ (on the right).
	The environment $\Lambda$ can be specified either by the $x_i$'s (right) or by the $\xi_i=x_i-x_{i-1}$ (left).
}\label{fig3_1}
\end{figure}
We wish to allow for random environments $\Lambda$.
For that we will assume that $\Lambda$ is distributed according to a renewal process as we describe next.
Let $\xi$ be a positive random variable and $\{\xi_i\}_{i\in\mathbb{Z}_+^*}$ a sequence composed of i.i.d.\ copies of $\xi$.
Set 
\begin{equation}
\label{e:def_lambda}
\Lambda:=\bigg\{\sum_{1\leq i\leq k} \xi_i; \ k\in\mathbb{Z}_+\bigg\}=\big\{x_k\in \mathbb{R};\,  x_0=0 \textrm{ and }  x_k=x_{k-1}+\xi_k\textrm{ for }k\in\mathbb{Z}_+^* \big\}.
\end{equation}
Thus $\xi_i$ gives the random separation between the $i$-th and $(i+1)$-th vertical columns in the stretched lattice.
The resulting sequence $\Lambda$ is called a renewal process with interarrival distribution $\xi$.
We denote $\upsilon_\xi(\cdot)$ the law of this renewal process. 
An overview on renewal processes will be provided in Section \ref{s:ren_pro}.

Given a realization of the environment $\Lambda$ we can define a bond percolation process in $\mathcal{L}_\Lambda$.
For each $p\in[0,1]$, denote $\mathbb{P}^\Lambda_p(\cdot)$ the probability measure on $\{0,1\}^{E(\mathcal{L}_\Lambda)}$ under which the random variables $\{\omega(e)\}_{e\in E(\mathcal{L}_\Lambda)}$ are independent Bernoulli random variables with mean 
\begin{equation}
\label{eq:_p_e}
p_e=p^\abs{e}
\end{equation}
where, for each edge $e:=\{v_1, v_2\}\in  E(\mathcal{L}_\Lambda)$, $\abs{e}=\norm{v_1-v_2}$ denotes the Euclidean length of $e$.

We write $o\leftrightarrow \infty$ for the event that there exists an infinite path starting at $o=(0,0)$ that only uses open edges, that is edges $e$ for which $\omega(e)=1$.

Our main results relate moment properties of $\xi$ to whether or not the resulting percolation process exhibits a non-trivial phase transition.
\begin{theo}\label{t3_1} Let $\xi$ be a positive random variable with $\mathbb{E}(\xi^\eta)<\infty$ for some $\eta>1$. 
Then there exists $p_c \in (0,1)$, depending on the law of $\xi$ only, such that for $p<p_c$
    \[
    \mathbb{P}^\Lambda_p(o\leftrightarrow\infty)=0, \textrm{ for }\upsilon_\xi\textrm{-almost all }\Lambda
    \]
    whereas for $p>p_c$,
    \[
    \mathbb{P}^\Lambda_p(o\leftrightarrow\infty)>0, \textrm{ for }\upsilon_\xi\textrm{-almost all }\Lambda.
    \]
    \end{theo}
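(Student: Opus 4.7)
The plan is to decompose the proof of Theorem~\ref{t3_1} into three tasks: establishing a deterministic critical value $p_c\in[0,1]$; proving the subcritical statement $p_c>0$; and proving the supercritical statement $p_c<1$, which is the main novelty. For the first task, for each fixed environment $\Lambda$ the measure $\mathbb{P}^\Lambda_p$ is a Bernoulli product, so Kolmogorov's 0-1 law forces $\mathbb{P}^\Lambda_p(\exists\text{ infinite open cluster})\in\{0,1\}$. Setting $p_c(\Lambda):=\inf\{p: \mathbb{P}^\Lambda_p(\exists\text{ infinite open cluster})=1\}$ and using that this event is invariant under the column shift $T:(\xi_i)_{i\geq 1}\mapsto(\xi_{i+1})_{i\geq 1}$ while $\upsilon_\xi$ is $T$-ergodic, I obtain that $p_c(\Lambda)$ is $\upsilon_\xi$-a.s.\ a deterministic constant $p_c$. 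A finite-energy argument then upgrades the existence of an infinite cluster to positivity of $\mathbb{P}^\Lambda_p(o\leftrightarrow\infty)$, yielding the stated dichotomy once $p_c\in(0,1)$ is known.

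For the lower bound $p_c>0$, the plan is a Peierls-type path count. Every self-avoiding path $\gamma$ from $o$ is open with probability $p^{\ell(\gamma)}$, where $\ell(\gamma)$ is its Euclidean length, and there are at most $4\cdot 3^{n-1}$ self-avoiding paths of graph length $n$. Picking $\delta>0$ with $\mathbb{P}(\xi\geq\delta)>0$ and exploiting the renewal structure together with the strong law of large numbers, one can show that $\upsilon_\xi$-a.s.\ the Euclidean length of any self-avoiding path of graph length $n$ grows at least linearly in $n$ for all $n$ large. The geometric sum $\sum_n 3^n p^{cn}$ then gives $\mathbb{P}^\Lambda_p(o\leftrightarrow\infty)=0$ for $p$ sufficiently small.

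For the upper bound $p_c<1$---the hard part---the plan is a multi-scale renormalization. I partition $\mathcal{L}_\Lambda$ into rectangular blocks of $L$ consecutive columns and height $H=H(L)$, calling a block \emph{good} if it admits open horizontal and vertical crossings. The moment assumption $\mathbb{E}(\xi^\eta)<\infty$, $\eta>1$, enters via Markov's inequality $\mathbb{P}(\xi>t)\leq t^{-\eta}\mathbb{E}(\xi^\eta)$, yielding a polynomial tail bound on $\max_{1\leq i\leq L}\xi_i$. For $p$ close to $1$ and $H(L)$ chosen to grow sufficiently fast with $L$, each column-gap of a block hosts at least one open horizontal edge among its $H(L)$ candidate rows with high probability, and these horizontal crossings can be glued together using the abundant open vertical edges. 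Once the good-block probability is sufficiently close to $1$, a Liggett--Schonmann--Stacey-type stochastic domination produces a supercritical Bernoulli site percolation on a renormalized $\mathbb{Z}^2$ lattice whose infinite open cluster pulls back to one through $o$ in $\mathcal{L}_\Lambda$. The main obstacle will be the fine calibration of $L$, $H(L)$, and the notion of \emph{good} so that the coarse-grained percolation is genuinely supercritical; the requirement $\eta>1$, rather than merely $\mathbb{E}(\xi)<\infty$, is what delivers the polynomial-tail control needed to close the renormalization, consistently with the complementary negative result for $\mathbb{E}(\xi^\eta)=\infty$ with some $\eta<1$ announced in the abstract.
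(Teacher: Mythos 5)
Your three-part skeleton---(i) a deterministic threshold via the $0$--$1$ law and shift-ergodicity, (ii) a Peierls count for $p_c>0$, and (iii) a renormalization for $p_c<1$---is reasonable, and (i) matches the short ergodicity argument in the paper. However, both (ii) and (iii) contain concrete gaps, and in each case the paper takes a structurally different route.

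In (ii), the key claim that ``$\upsilon_\xi$-a.s.\ the Euclidean length of any self-avoiding path of graph length $n$ grows at least linearly in $n$'' is false as soon as $\mathbb{P}(\xi<\epsilon)>0$ for every $\epsilon>0$, which the hypotheses of Theorem \ref{t3_1} allow. Almost surely there is, within the first $O(n)$ columns, a run of $m\asymp\log n/\log\big(1/\mathbb{P}(\xi<\epsilon)\big)$ consecutive gaps with $\xi_i<\epsilon$; a self-avoiding path that walks out to this run and then zigzags inside the width-$m$ strip performs roughly $m$ horizontal steps of length $<\epsilon$ per unit vertical step, giving Euclidean length of order $n/\log n$ rather than $cn$. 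Since the number of self-avoiding paths of length $n$ confined to such a strip is still exponential in $n$, the Peierls sum $\sum_n(\text{const})^n\,p^{cn/\log n}$ diverges for every fixed $p<1$, and your bound does not close. The paper handles $p_c>0$ differently: it truncates the $\xi_i$ at a level $\kappa$ with $\mathbb{P}(\xi\geq\kappa)\geq 1/2$, contracts the sub-$\kappa$ gaps (producing enhanced vertical edge probabilities $1-(1-p^\kappa)^{\zeta_i}$, which is exactly what breaks a naive path count), and then observes that the planar dual is again a stretched model of the same type with geometric-type increments $\zeta_i$ and parameter $1-p^\kappa$. The blocking dual circuits are supplied by the supercritical machinery (Lemmas \ref{lmult1} and \ref{l3}) applied to the dual environment $\Xi$; the subcritical statement thus rides on the multiscale scheme rather than on any path count.

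In (iii), despite the label ``multi-scale,'' what you describe is a single-scale block argument ($L$ columns by $H(L)$ rows) followed by an LSS-type domination, and this cannot work here. For a fixed (quenched) environment, $\max_{i\in[jL,(j+1)L)}\xi_i$ is not uniformly bounded over the block index $j$: almost surely the $L$-block maximum exceeds any threshold $T$ infinitely often, and for such blocks the conditional crossing probability collapses, so there is no uniform lower bound to feed into LSS. Annealing over $\Lambda$ does not help, because the disorder is columnar: blocks in the same column range at different heights share the same $\xi_i$'s, so the ``good block'' field has infinite-range vertical dependence and is not $k$-dependent in the sense LSS requires. The paper avoids this by iterating through infinitely many scales $L_k$ growing super-exponentially (\eqref{Lk}), labeling blocks good/bad hierarchically in the environment (\eqref{AA}), proving the contraction $p_k\leq L_k^{-\alpha}$ using the decoupling inequality of Lemma \ref{dec_ine}, and matching it with the recursion $q_k(p)\leq\exp(-L_k^\beta)$ for crossing probabilities (Lemmas \ref{l4}--\ref{l5}). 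Crucially, the rare bad $L_k$-blocks inside a good $L_{k+1}$-block are not discarded but traversed ``straight along the bottom'' (the events $B_\ell^k$) and absorbed into the next scale; this cross-scale absorption is exactly what a one-step LSS comparison cannot reproduce, and it is also where the moment condition $\eta>1$ enters, through the summability across scales of the decoupling error in \eqref{e:decoup_upsilon}.
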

    \begin{theo}\label{t3_2} Let $\xi$ be a positive random variable with $\mathbb{E}(\xi^\eta)=\infty$ for some $\eta<1$.
    Then for any $p\in[0,1)$,  
    \[
    \mathbb{P}^\Lambda_p(o\leftrightarrow\infty)=0, \textrm{ for }\upsilon_\xi\textrm{-almost every environment }\Lambda.
    \]
    \end{theo}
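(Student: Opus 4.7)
The plan is to use the heavy tail of $\xi$ to produce, $\upsilon_\xi$-almost surely, an infinite sequence of ``blocking columns'' where the effective horizontal opening probability $p^{\xi_i}$ is so small that no horizontal edge of that column is open up to any prescribed polynomial height, thereby confining the cluster of the origin to a finite rectangle.

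First, rewriting $\mathbb E(\xi^\eta)=\sum_{k\ge1}\mathbb P(\xi\ge k^{1/\eta})$ and using the hypothesis, $\sum_{i\ge1}\mathbb P(\xi_i\ge i^{1/\eta})=\infty$; the second Borel--Cantelli lemma (for i.i.d.\ events) yields $\upsilon_\xi$-a.s.\ an infinite sequence $i_1<i_2<\cdots$ with $\xi_{i_k}\ge i_k^{1/\eta}$. For each such $i_k$, setting $N_k:=i_k^{10}$ and letting $B_k$ be the event that every horizontal edge between column $i_k-1$ and column $i_k$ at heights $0,1,\dots,N_k$ is closed, we get
\[
\mathbb P^\Lambda_p(B_k)\ge (1-p^{i_k^{1/\eta}})^{N_k+1}\ge 1-(N_k+1)\,p^{i_k^{1/\eta}} \xrightarrow[k\to\infty]{} 1,
\]
since $1/\eta>1$ makes $p^{i_k^{1/\eta}}$ decay faster than any polynomial in $i_k$.

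The middle step---and the main technical obstacle---is to show that the maximum height $\widetilde H_j$ reached by the cluster of $o$ in the restricted subgraph on columns $\{0,\dots,j\}$ is polynomially bounded in $j$, $\mathbb P^\Lambda_p$-almost surely. Heuristically, enlarging the subgraph by one column only grows the cluster through open horizontal entries from the at most $\widetilde H_j+1$ cluster-vertices in column $j$, each followed by a Geometric-$(1-p)$ upward extension, so a union bound gives $\mathbb P^\Lambda_p(\widetilde H_{j+1}-\widetilde H_j>t\mid\widetilde H_j)\le (\widetilde H_j+1)\,p^t$; iterating this with standard concentration produces a bound like $\widetilde H_j\le j^{10}$ eventually. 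The subtlety is that column $j+1$ can also ``backfill'' new vertices into earlier columns through open horizontal edges at heights not previously in the restricted cluster, which breaks a naive left-to-right Markov structure; to handle this one has to work with a suitably defined exploration (or use a direct path-counting argument that exploits the heavy tails already present inside the strip), but any polynomial-in-$j$ bound suffices for the blocking argument.

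Finally, since $B_k$ depends only on horizontal edges across column $i_k$ while $\{\widetilde H_{i_k-1}\le i_k^{10}\}$ depends only on edges inside the half-strip $\{0,\dots,i_k-1\}\times\mathbb Z_+$, they are $\mathbb P^\Lambda_p$-independent, and both probabilities tend to $1$; therefore $\mathbb P^\Lambda_p(B_k\cap\{\widetilde H_{i_k-1}\le i_k^{10}\})\to 1$ as $k\to\infty$. On this joint event any open path from $o$ leaving the rectangle $\{0,\dots,i_k-1\}\times\{0,\dots,i_k^{10}\}$ would have to traverse a horizontal edge of column $i_k$ at a height $\le i_k^{10}$, but all such edges are closed by $B_k$; hence the cluster of the origin is contained in that finite rectangle, which gives $\mathbb P^\Lambda_p(o\leftrightarrow\infty)=0$ for $\upsilon_\xi$-a.e.\ $\Lambda$, as claimed.
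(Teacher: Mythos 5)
There is a genuine gap, and you already put your finger on it: the middle step, bounding the height $\widetilde H_{j}$ of the strip-restricted cluster by a polynomial in $j$, is not proved, and it is the entire content of the argument. The sketch you give (increment $\widetilde H_{j+1}-\widetilde H_j$ bounded by $\log \widetilde H_j$ via a geometric estimate, iterated with concentration) relies on a left-to-right exploration that, as you note yourself, is invalidated by backfilling; you do not supply the ``suitably defined exploration'' or ``direct path-counting argument'' that would repair it. Worse, such a polynomial bound is delicate to even formulate correctly: if $\mathbb{P}(\xi\leq 1)>0$ and $p$ is close to $1$, the environment will, with positive probability along the sequence, contain stretches of many consecutive tightly-packed columns forming an effectively supercritical $\mathbb Z^2$-strip, in which cluster heights are controlled by a correlation length that grows rapidly with the strip width. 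So the claim ``$\widetilde H_j\leq j^{10}$ eventually, a.s.'' would itself require a quantitative multiscale-type analysis of the environment in the strip, which is precisely what this theorem is supposed to avoid.

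The paper's proof sidesteps this entirely and is worth comparing with. Instead of controlling the cluster's vertical extent, it uses the elementary crossing dichotomy: if $o\leftrightarrow\infty$ then for every rectangle $R_k=R\big([0,i_k)\times[0,H_k)\big)$ containing $o$, either a horizontal crossing $\mathcal C_h(R_k)$ or a vertical crossing $\mathcal C_v(R_k)$ occurs, so
\[
\mathbb P^\Lambda_p(o\leftrightarrow\infty)\leq \mathbb P^\Lambda_p\big(\mathcal C_h(R_k)\big)+\mathbb P^\Lambda_p\big(\mathcal C_v(R_k)\big).
\]
The height is chosen \emph{exponential}, $H_k=\lceil\exp(i_k^{1+\epsilon})\rceil$, not polynomial, with $1/\eta=1+2\epsilon$. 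The horizontal crossing is killed exactly as in your $B_k$: $\mathbb P^\Lambda_p(\mathcal C_h(R_k))\leq H_k\, p^{\xi_{i_k}}\leq H_k\exp(i_k^{1+2\epsilon}\ln p)\to 0$ along the Borel--Cantelli subsequence with $\xi_{i_k}>i_k^{1+2\epsilon}$. The vertical crossing, which is the analogue of your height-control problem, is bounded by a trivial row-by-row observation requiring zero knowledge of cluster geometry: a bottom-to-top crossing must use, for each of the $|J_k|=H_k$ rows, at least one of the $i_k$ open vertical edges in that row, giving $\mathbb P^\Lambda_p(\mathcal C_v(R_k))\leq\big(1-(1-p)^{i_k}\big)^{H_k}\leq\exp\!\big(-\exp(i_k\ln(1-p)+i_k^{1+\epsilon})\big)\to0$. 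This is immune to backfilling because it never tracks the cluster of $o$ at all, and it explains why the height must be taken doubly large rather than polynomial: you need $H_k$ to beat $(1-p)^{-i_k}$, which forces $H_k$ at least exponential in $i_k$; your polynomial $N_k=i_k^{10}$ is far too small for the vertical-crossing bound to close, which is another symptom of the same gap.
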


Theorem \ref{t3_1} states that the model undergoes a non-trivial phase transition when the renewal increments are heavy-tailed as long as they have finite moments of any order greater than one.
Its proof relies on the control of the environment and of crossing events in the resulting stretched lattice via multiscale analysis.
The control of the crossing events is reminiscent of the one presented in \cite{Bramson91} where the authors study the survival of a contact process in a random environment.
However, the techniques developed there, when translated to our context, seem only to apply when the $\xi$'s have geometric distribution.

Theorem \ref{t3_2} rules out the occurrence of a nontrivial phase transition when the increments of the renewal process have sufficiently heavy tails. 
Not very surprisingly, this phenomenon stems from the fact that the consecutive columns will be typically located very far apart. 
Its proof is presented in Section \ref{demt3_2} and consists of a Borel-Cantelli argument.

An interesting problem is to determine whether phase  transition occurs when the lattice is stretched both horizontally and vertically. 
As shown in \cite{Hoffman05} the answer is positive when $\xi$ has geometric distribution.
For dimensions $d \geq 3$, non-trivial phase transition was shown to take place in a very similar setup where the edges are stretched according to exponential random variables in \cite{Jonasson00}.
We are currently unable to tackle the problem when the stretching is made according to general renewal processes in more than one direction.

One may ask whether the moment conditions appearing in Theorems \ref{t3_1} and \ref{t3_2} could be relaxed.
This may be done for Theorem \ref{t3_2}, for instance by imposing only that $\mathbb{E}(\xi \log({1+\xi})^{-\varepsilon})=\infty$ for some $\varepsilon>0$, without complicating too much its proof.
However, for Theorem \ref{t3_1}  it is not obvious how to get a substantial improvement.
Indeed, the assumed moment condition is crucial to obtain a good decoupling bound for the environment.
A weaker condition would result in a worse control of the environment that can be overcome by changing the scale progression in which we analyse the system. 
However, such change would result in several difficulties in the control of the crossing events.
As we explain better in Remark \ref{r:decoupling_bound}, our results can be extended beyond the case of $i.i.d.$ $\xi$'s provided that the environment satisfies a similar decoupling bound.

The interesting question whether the dichotomy for occurrence of non-trivial phase transition is determined by the existence or not of the first moment of $\xi$ is currently out of reach of our techniques and seems to be a complicated question.

\subsection{Related work and motivation}

Our main motivation in this work is to study how the presence of impurities  may affect the phase transition in a given system.
This is an important subject in disordered systems and has been investigated in a number of contexts including percolation, polymer models, spin models and interacting particle systems.

By the word impurities, we mean either defects in the structure of the underlying lattice such as dilution, or inhomogeneities in the parameters that govern local interactions such as the probability of opening the edges.

In the context of finite weighted graphs, with randomness in the weights, Aldous \cite{aldous16} studied the emergence of the giant component.

For the model we consider here the disorder in the model has a twofold interpretation, as explained in Section \ref{s:equiv_form}.
Indeed, $\eqref{ipmsl}$ defines a model on $\mathbb{Z}^2_+$ with inhomogeneous probabilities of connecting neighboring sites while the formulation following from \eqref{e:integer_stretched} can be interpreted as a homogeneous percolation with parameter $p$ on a dilute lattice.

In the context of percolation the problem of strictly inequalities for the critical threshold when dilution is made regularly and deterministically was studied in  \cite{Menshikov87}.
A great step forward in the study of strict inequalities was the method of differential inequalities in \cite{Aizenman91}.
For instance, it was employed in \cite{Chayes00} to study the so-called mixed percolation that can be regarded as site percolation on a dilute  lattice where egdes are removed in an i.i.d.\ fashion.
There, the authors prove continuity and strict monotonicity of the threshold as a function of the dilution density.

For ferromagnetic spin systems, the effect of i.i.d.\ lattice dilution on the phase transition has been rigorously studied (see for instance, \cite{Griffiths68, Georgii81, Georgii84, Aizenman87_2}).

In the present paper, rather than in an i.i.d.\ fashion, disorder is introduced according to quenched realizations of subsets of the lattice that extend infinitely far in a single direction, namely the set of vertical columns in $\Lambda \times \mathbb{R}_+$.
We then say that the model exhibits columnar disorder.

Columnar disorder for the two-dimensional Ising model was introduced in the pioneering work of McCoy and Wu \cite{McCoy68}. 
There the coupling constants between neighboring spins are random and vary depending on the vertical columns to which they belong much in the spirit of \eqref{ipmsl}.
Back to percolation models, using the notation in \eqref{e:integer_stretched} other models in which the parameters are given by $p_e= p \mathbf{1}_{\{e\in E_{\text{vert}}(\Lambda)\}}+ q\mathbf{1}_{\{e\not\in E_{\text{vert}}(\Lambda)\}}$
have been studied.
It is clear that the behavior of the system will depend on the choices of $\Lambda$, $p$ and $q$.
However, in contrast to the situation in \cite{Chayes00} where a trade-off between the dilution and interaction strength can be obtained in order to control the critical curve, in the presence of columnar disorder, the picture is much more modest.
In fact, even determining whether a non trivial phase transition persists or proving strictly inequalities for the critical threshold may be a hard question.
For instance, in \cite{Zhang94}, it is shown that, in dimension $2$, enhancement along a single line of defects (that is, $\Lambda = \{0\}$) does not alter the critical threshold.
In \cite{Copin18} the behaviour of the phase transition in the presence of an i.d.d.\ mixture of enhanced and weaken vertical columns was studied.
The persistence of percolation for a directed percolation model propagating across a set of rare weaken transverse columns was studied in  \cite{Kesten12}.

Regarding time as an extra dimension, the same type of disorder arises also in the study of one-dimensional interacting particle systems in random environment, for instance the voter model \cite{Ferreira90} and the contact process \cite{Bramson91, Liggett92, Madras94, Newman96}.
As mentioned before, we believe that the methods of \cite{Bramson91} can be adapted to prove the existence of percolation in our setting when $\xi$ has geometric distribution.

For the contact process in higher dimensional random environment, \cite{Klein94} establishes moment conditions for non-survival in a setting somewhat similar to the one in \cite{Campanino91} where $d$-dimensional Ising model with lower-dimensional disorder was studied.

For percolation on stretched Euclidean lattices, the existence of phase transition was obtained in dimensions $d\geq 3$ when the lattice is stretched in all the three directions according to exponential random variables \cite{Jonasson00}.
A few years later, in \cite{Hoffman05} the $d=2$ case was also settled.

\subsection{Equivalent formulations}
\label{s:equiv_form}

Consider the first quadrant of the square lattice as $\mathbb{Z}^2_+=(V(\mathbb{Z}^2_+), E(\mathbb{Z}^2_+))$ where 
 \begin{eqnarray}
 V(\mathbb{Z}^2_+)&=&\big\{v=(v_1,v_2)\in \mathbb{R}^2;\, v_1, v_2\in\mathbb{Z}_+\big\},\nonumber\\
 E(\mathbb{Z}^2_+)&=&\big\{\{v,w\}\subseteq V(\mathbb{Z}^2_+);\, \abs{v_1-w_1}+\abs{v_2-w_2}=1\big\},\nonumber
 \end{eqnarray}
and where we write $\mathbb{Z}_+ = \{0,1,2\ldots\}$.
We often abuse notation and do not distinguish between $V(\mathbb{Z}^2_+)$ and $\mathbb{Z}^2_+$, and similarly for other graphs.

We write $v\sim w$ if $v$ is a {\it neighbor} of $w$, i.e. $\{v, w\}\in E(\mathbb{Z}^2_+)$. 
A {\it path} in $A\subseteq \mathbb{Z}^2_+$ is a sequence of sites $v_0\sim v_1\sim \cdots\sim v_n$ such that $v_i\in A$ for~all~$i$.
Given a bond percolation configuration $\omega\in\{0,1\}^{E(\mathbb{Z}^2_+)}$, an edge $e\in E(\mathbb{Z}^2_+)$ is said {\it open} if $\omega(e)=1$, otherwise it is said {\it closed}. 
For two sites $v, w\in \mathbb{Z}_+^2$, $v$ and $w$ are {\it connected} (denoted $v\leftrightarrow w$) if there exists a sequence $v=v_0\sim v_1\sim \cdots \sim v_n=w$ such that $\omega(\{v_i, v_{i+1}\})=1$ for every $0\leq i<n$.
The cluster of a site $v$ is the set of all sites $w$ such that $v\leftrightarrow w$, and we denote by $\{v\leftrightarrow \infty\}$ the event where the cluster of $v$ has infinite cardinality.

The percolation model on $\mathcal{L}_\Lambda$ defined by \eqref{eq:_p_e} may be mapped into the percolation on $\mathbb{Z}^2_+$ where, conditional on $\xi_1, \xi_2, \ldots$, each edge $e\in E(\mathbb{Z}^2_+)$ is declared open independently with probability 
\begin{eqnarray}
p_e=\left\{
\begin{array}{ll}
p,& \text{if $e=\{(i,j),(i, j+1)\}$ for some $i,j$,}\\
p^{\xi_{i+1}},& \text{if $e=\{(i,j),(i+1, j)\}$ for some $i,j$.}
\end{array}
\right.
\label{ipmsl}
\end{eqnarray}
In this alternative formulation, conditional on $\xi_1, \xi_2, \ldots$, the resulting bond percolation process in $\mathbb{Z}^2_+$ is inhomogeneous, unless the distribution of $\xi$ is concentrated on $1$.
If we average in the realization of the $\xi_i$'s we obtain a model that is homogeneous but with infinite-range dependencies along the vertical direction.

In the case where $\xi$ is positive and integer-valued we can map the percolation model defined on $\mathcal{L}_\Lambda$ (with $\Lambda\subseteq\mathbb{Z}_+$ defined as in \eqref{e:def_lambda}) with parameters given by \eqref{eq:_p_e} to yet another equivalent model on $\mathbb{Z}^2_+$ as follows.
Let
\[
E_{\text{vert}}(\Lambda^c)\vcentcolon=\big\{\{(x, y), (x, y+1)\}\in E(\mathbb{Z}_+^2); x\not\in \Lambda, y\in \mathbb{Z}_+\big\}.
\] 
Let each edge $e\in E(\mathbb{Z}^2_+)$ be open independently with probability 
\begin{equation}
\label{e:integer_stretched}
p_e=\left\{
\begin{array}{ccc}
0& \textrm{ if }e\in E_{\text{vert}}(\Lambda^c),\\
p& \textrm{ if }e\not\in E_{\text{vert}}(\Lambda^c),
\end{array}\right.
\end{equation}
and closed otherwise.

Geometrically, this formulation can also be viewed as a bond percolation model in a dilute lattice obtained from $\mathbb{Z}^2_+$ by removing the edges lying in vertical columns that project to $\Lambda^c$ while preserving all other edges.
The resulting graph is similar to the stretched lattice $\mathcal{L}_\Lambda$ above, only that now the edges are split into unit length segments.
Each one of these is open independently with probability $p$.
One can recover the original formulation on $\mathcal{L}_\Lambda$ by declaring an edge open if all the corresponding unit length edges in $\mathbb{Z}^2_+$ are open in the new formulation.

For convenience, we will use the different formulations of the model in different parts of the text. 
Since they are all equivalent, abuse notation denoting also $\mathbb{P}^\Lambda_p(\cdot)$ the law of the new versions and hope it will be clear from the context which formulation we are using.

\subsection{Overview of the paper}

We now present an brief overview of the paper.
Section \ref{s:ren_pro} contains a brief review on renewal processes and the proof a decoupling inequality which is crucial to the control of the environment \eqref{t3_1}.
In Section \ref{s:multiscale}, we develop the multiscale scheme that will be used to control the environment and the percolation process in order to obtain the proof of Theorem \ref{t3_1}.
First, in Section \ref{multiscale1} we define a fast-growing sequence of numbers which correspond to the scales in which we analyze the model. 
Then we partition $\mathbb{Z}_+$ into the so-called blocks which are intervals whose lengths are related to the scales.
A block at scale $k$ will be labeled either bad or good hierarchically depending on whether or not the renewal process within it has arrivals that are close to each other, respectively.
We will show that bad blocks are extremely rare. 
Section \ref{s:multiscale2} is destined to the construction of crossing events in rectangles that extend very far vertically and whose basis project into blocks. 
We show that said crossings occur with very high probability if the projection fall into good blocks.
In Section~\ref{demt3_1}, we finish the proof of Theorem \ref{t3_1} putting together the fact that the blocks are most likely good and that good blocks are easy to cross.
Theorem \ref{t3_2} is proved in Section \ref{demt3_2}.

\subsection*{Acnowledgements}

The research of AT was partially supported by CNPq grants `Produtividade em Pesquisa' (304437/2018-2) and `Projeto Universal' (304437/2018-2), and by FAPERJ grant (202.716/2018).
The research of MH was partially supported by CNPq grants `Projeto Universal' (406659/2016-8) and `Produtividade em Pesquisa' (307880/2017-6) and by FAPEMIG grant `Projeto Universal' (APQ-02971-17)MS was supported by CAPES.
RS was supported by CNPq (grant 310392/2017-9), CAPES and FAPEMIG (PPM 0600/16).

\section{Renewal processes}
\label{s:ren_pro}

The purpose of this section is to prove a decoupling inequality (Lemma \ref{dec_ine}), which will be used as a fundamental tool in our multiscale analysis in Section \ref{multiscale1}. 
We start presenting a brief outline of some results on renewal processes.

Let $\xi$ and $\chi$ be integer-valued random variables called {\it interarrival time} and {\it delay}, respectively.
We assume that $\xi \geq 1$ and $\chi \geq 0$ a.s.
Let $\{\xi_i\}_{i\in\mathbb{Z}^*_+}$ be i.i.d.\ copies of $\xi$, also independent of $\chi$.
We define the {\it renewal process}
$$X=X(\xi,\chi)=\{X_i\}_{i\in\mathbb{Z}_+}$$
recursively as:
$$X_0=\chi,  \textrm{\ \ \ \ and \ \ \ \ }X_i=X_{i-1}+\xi_i \textrm{ for }i\in\mathbb{Z}_+^*.$$
We say that the $i$-th renewal occurs at time $t$ if $X_{i-1}=t$.
The law of $X$ regarded as a random element on a probability space supporting $\chi$ and the i.i.d.\ copies of $\xi$ will be denoted by $\upsilon_\xi^\chi$.

It is convenient to define two other processes 
\[
Y=Y(\xi,\chi)=\{Y_n\}_{n\in\mathbb{Z}_+}\textrm{\ \ \ \ and \ \ \ \ }Z=Z(\xi,\chi)=\{Z_n\}_{n\in\mathbb{Z}_+},
\]
as
\begin{eqnarray}
Y_n=\left\{
\begin{array}{lll}
1,& \textrm{ if a renewal of }X\textrm{ occurs at time } n,\\
0,&\textrm{ otherwise}.
\end{array}\right.
\label{defY}\end{eqnarray}
and
\begin{eqnarray}
Z_n=\min\{X_i-n; \ i\in\mathbb{Z}_+\textrm{ and } X_i-n\geq 0\}.\label{defZ}
\end{eqnarray}
Since each one of the processes  $X$, $Y$ and $Z$ fully determines the two others (see Figure \ref{figurexyz}), $Y$ and $Z$ will also be called renewal processes with interarrival time $\xi$ and delay $\chi$. 
We abuse notation and write $\upsilon_\xi^\chi$ for the law of $Y$ and $Z$. It is worth noticing that $Z$ is a Markov chain.

\begin{figure}[htb!]
	\centering
	\begin{tikzpicture}[scale=0.8, every node/.style={scale=0.8}]
	\draw[thick,->](-1,1.5)--(12.7,1.5);
	\foreach\x in {-1,0,1,...,12}\draw[thick] (\x,1.6)--(\x,1.4);
	\draw[thick,->](-1,0.3)--(12.7,0.3);
	\foreach\x in {-1,0,1,...,12}\draw[thick] (\x,0.4)--(\x,0.2);
	\draw[thick,->](-1,-0.9)--(12.7,-0.9);
	\foreach\x in {-1,0,1,...,12}\draw[thick] (\x,-1)--(\x,-0.8);
	\foreach\x in {0,2,7,10,11}\filldraw (\x,1.5) circle(3pt);
	\foreach\x/\y in {0/0,2/1,7/2,10/3,11/4}\node at (\x,1.1) {$X_{\y}$};
	\foreach\x in {0,2,7,10,11}\node at (\x,-0.1) {$1$};
	\foreach\x in {-1,1,3,4,5,6,8,9,12}\node at (\x,-0.1) {$0$};
	\foreach\x/\y in {-1/1,0/0,1/1,2/0,3/4,4/3,5/2,6/1,7/0,8/2,9/1,10/0,11/0,12/\cdots}\node at (\x,-1.3) {$\y$};
	\draw[->] (0.2,1.7) arc (90+43.6:90-43.6:1.16);
	\draw[->] (2.2,1.7) arc (90+18.59:90-18.59:6.9);
	\draw[->] (7.2,1.7) arc (90+30.96:90-30.96:2.53);
	\draw[->] (10.2,1.8) arc (90+65.8:90-65.8:0.3);
	\draw (11.2,1.7) arc (90+43.6:80:1.3);
	\draw[->] (-0.8,1.7) arc (90+65.8:90-65.8:0.3);
	\foreach\x/\y in {1/1,4.5/2,8.5/3,10.5/4,12/5}\node at (\x,2.3) {$\xi_{\y}$};
	\node at (-0.5,2.3) {$\chi$};
	\node at (-1,1.1) {0};
	\foreach\x/\y in {1.1/X, -0.1/Y_n, -1.3/Z_n}\node at (-1.9,\x+0.2) {$\y:$};
	\end{tikzpicture}
	\caption{Illustration of the processes $X,Y,Z$. In this realization, $\chi=1$, $\xi_1=2$, $\xi_2=5$, $\xi_3=3$ and $\xi_4=1$. \label{figurexyz}}
\end{figure}
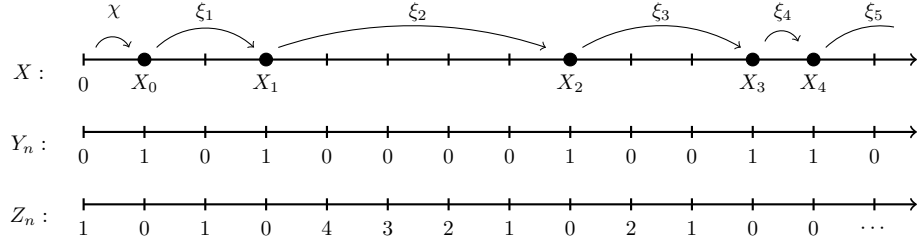

For $m\in\mathbb{Z}_+$ consider $\theta_m:\mathbb{Z}^\infty\mapsto \mathbb{Z}^\infty$, the shift operator given by 
\[
\theta_m(x_0, x_1, \cdots)=(x_m, x_{m+1},\cdots).
\]
It is desirable that $Z$ be invariant under shifts, that is,
\begin{eqnarray}\label{thetam}
\theta_m Z\overset{d}{=} Z\textrm{ \ \ for any }m\in\mathbb{Z}_+^*.
\end{eqnarray}

When $\mathbb{E}(\xi)< \infty$, we can define a random variable $\rho=\rho(\xi)$ with distribution
\begin{equation}
\label{e:stat_decay}
\rho_k=\mathbb{P}(\rho=k):=\dfrac{1}{\mathbb{E}(\xi)}\sum_{i=k+1}\mathbb{P}(\xi=i),\ \textrm{for any }k\in\mathbb{Z}_+,
\end{equation}
independent of everything else.
It is straightforward to show that, using $\rho$ as the delay yields a Markov process $Z(\xi,\rho)$ satisfying \eqref{thetam}.
For this reason, the random variable $\rho$ with distribution given by \eqref{e:stat_decay} is called {\it stationary delay}. 
In particular, 
\begin{eqnarray}
Z_n\overset{d}{=}Z_0\overset{d}{=}\rho.
\label{e:Z_rho}
\end{eqnarray} 
Also notice that 
\begin{eqnarray}
\text{if \,\,\,$
\mathbb{E}(\xi^{1+\varepsilon})<\infty,$ \,\,\, then \,\,\,  
$\mathbb{E}(\rho^{\varepsilon})<\infty$}.
\label{finitudederho}
\end{eqnarray}

Let $X=X(\xi,\chi)$ and $X'=X(\xi,\chi')$ be two independent renewal processes with interarrival time $\xi$ and delays $\chi$ and $\chi'$ respectively, and denote $\upsilon^{\chi,\chi'}_\xi(\cdot)$ the product measure $\upsilon^{\chi}_\xi\otimes\upsilon^{\chi'}_\xi$. Recall the definition of $Y$, $Y'$ in \eqref{defY} and define 
\[
T:=\min\{k\in\mathbb{Z}_+^*; \ Y_k=Y_k'=1\},
\]
the {\it coupling time} of $X$ and $X'$.

We say that $\xi$ is {\it aperiodic} if 
\[
\gcd\big\{k\in\mathbb{Z}^*_+; \ \mathbb{P}(\xi=k)>0\big\}=1.
\]

We now proceed to prove a decoupling inequality for stationary renewals. 
In order to do so, we will bound $\upsilon_\xi^{\chi, \chi'}(T>n)$ above applying Markov's inequality to $T^\varepsilon$, where $\varepsilon>0$.
The next theorem whose proof can be found in \cite[Theorem 4.2, p.\ 27]{Lindvall02}, establishes sufficient conditions on the delays and on the interarrival time for $T^\varepsilon$ to have finite expectation.
We write $\mathbb{E}_\xi^{\chi,\chi'}(\cdot)$ for the expectation with respect to  $\upsilon^{\chi,\chi'}_\xi$.

\begin{theo} \cite[Theorem 4.2]{Lindvall02}
\label{lind} Let $\xi$ be an aperiodic positive integer-valued random variable. 
Suppose that for some $\varepsilon\in(0,1)$, $\mathbb{E}(\xi^{1+\varepsilon})<\infty$, and that $\chi$, $\chi'$ are non-negative integer-valued random variables with $\mathbb{E}(\chi^\varepsilon)$ and $\mathbb{E}(\chi'^\varepsilon)$ finite. 
Then $\mathbb{E}^{\chi',\chi'}_\xi(T^\varepsilon)<\infty$.
\end{theo}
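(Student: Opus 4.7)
The plan is to bound the coupling time $T$ by waiting, from the viewpoint of the process with the larger delay, for one of its renewals to coincide with a renewal of the other process, and then apply Wald's identity together with the subadditivity of $x \mapsto x^\varepsilon$. By symmetry it suffices to treat the event $\{\chi \leq \chi'\}$. Writing the renewal times of $X'$ as $\sigma'_j := \chi' + \xi'_1 + \cdots + \xi'_j$ and letting
\[
N' := \inf\bigl\{j \geq 0 : \sigma'_j \in A\bigr\}, \qquad A := \{X_k\}_{k \geq 0},
\]
we have $T = \sigma'_{N'}$; the key task is to bound $\mathbb{E}(N' \mid \chi,\chi')$ uniformly on $\{\chi \leq \chi'\}$.

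By aperiodicity of $\xi$ and Blackwell's renewal theorem, the renewal function $u$ of $X$ satisfies $u(m) \to 1/\mathbb{E}(\xi) > 0$, so $u(m) \geq c$ for all $m \geq n_0$. On $\{\chi \leq \chi'\}$ we have $\sigma'_j - \chi \geq j$, hence for $j \geq n_0$ the conditional probability $\mathbb{P}(\sigma'_j \in A \mid X) = u(\sigma'_j - \chi)$ is at least $c$. The events $\{\sigma'_j \in A\}_{j \geq 0}$ depend on the common trajectory of $X$, but using the Markov property of the residual-life chain $Z = Z(\xi,\chi)$ of $X$ at the times $\sigma'_j$---the gap $\sigma'_{j+1} - \sigma'_j = \xi'_{j+1}$ allows $Z$ to ``refresh''---one extracts a geometric tail $\mathbb{P}(N' > j \mid \chi,\chi') \leq C(1-c')^j$, so that $\mathbb{E}(N' \mid \chi,\chi') \leq C'$ uniformly. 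Since $N'$ is a stopping time for the filtration $\mathcal{F}_j := \sigma(\chi, X, \chi', \xi'_1, \ldots, \xi'_j)$, on which the $\xi'_i$'s remain i.i.d.\ and independent of the past, Wald's identity yields
\[
\mathbb{E}\Bigl(\sum_{i=1}^{N'} (\xi'_i)^\varepsilon \,\Big|\, \chi, \chi'\Bigr) = \mathbb{E}(N' \mid \chi,\chi') \cdot \mathbb{E}(\xi^\varepsilon).
\]
Combined with the subadditivity $T^\varepsilon = (\sigma'_{N'})^\varepsilon \leq \chi'^\varepsilon + \sum_{i=1}^{N'} (\xi'_i)^\varepsilon$ and $\mathbb{E}(\xi^\varepsilon) \leq \mathbb{E}(\xi^{1+\varepsilon})^{\varepsilon/(1+\varepsilon)} < \infty$, this gives $\mathbb{E}(T^\varepsilon \mathbf{1}_{\{\chi \leq \chi'\}}) \leq \mathbb{E}(\chi'^\varepsilon) + C'' < \infty$, and the symmetric argument bounds the complementary event.

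The main obstacle is the uniform geometric tail on $N'$. Because the events $\{\sigma'_j \in A\}_{j \geq 0}$ share the common realization of $X$, a naive Bernoulli comparison does not apply, and one must quantify the mixing of the residual-life chain $Z$ between consecutive $\sigma'_j$'s, where the gap $\xi'_{j+1}$ is itself random and possibly heavy-tailed. This is precisely where the aperiodicity of $\xi$ and the $1+\varepsilon$-moment assumption enter together: the former gives a positive asymptotic renewal density via Blackwell's theorem, while the latter ensures a fast enough mixing rate and a finite $\varepsilon$-th moment for the stationary residual $\rho$ introduced in \eqref{e:stat_decay}.
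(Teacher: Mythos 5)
The paper itself does not prove this statement: it is imported verbatim as \cite[Theorem~4.2]{Lindvall02}, so there is no in-paper argument to compare against. Your attempt must therefore be judged on its own terms, and it has a genuine gap at the crucial step.

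The gap is the assertion of a uniform geometric tail $\mathbb{P}(N' > j \mid \chi,\chi') \leq C(1-c')^j$. You correctly observe that the events $\{\sigma'_j \in A\}$ all share the trajectory $X$ and that a naive Bernoulli comparison fails, but the proposed repair --- the Markov property of the residual-life chain $Z$ plus ``refreshing'' over the gaps $\xi'_{j+1}$ --- cannot produce a uniform geometric rate under only a $(1+\varepsilon)$-moment hypothesis. The invariant law of $Z$ is $\rho$, which by \eqref{finitudederho} has only a finite $\varepsilon$-th moment; after a long interarrival of $X$ the residual life at $\sigma'_j$ can be huge, and the chain then needs an unbounded (indeed heavy-tailed) number of steps before the residual is small enough for the per-step coupling probability to be bounded away from $0$. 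The gap $\xi'_{j+1}$ may equal $1$ with positive probability, so it provides essentially no decorrelation when it matters. Worse, if the geometric bound were true, the very same Wald computation applied to $\sum_{i \leq N'} \xi'_i$ rather than $\sum_{i \leq N'} (\xi'_i)^\varepsilon$ would give $\mathbb{E}(T) < \infty$ whenever $\mathbb{E}(\chi')$ and $\mathbb{E}(\xi)$ are finite --- a strictly stronger statement than Lindvall's theorem, and one that fails for heavy-tailed $\xi$: the whole point of obtaining only $\mathbb{E}(T^\varepsilon) < \infty$ rather than $\mathbb{E}(T) < \infty$ is that the coupling time inherits the polynomial tail of the stationary residual. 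Lindvall's actual argument works with the bivariate residual-life chain $(Z,Z')$ and controls moments of its hitting time to the diagonal directly, without ever producing a uniform geometric rate.
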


We can now prove the desired decoupling inequality for renewal processes:
\begin{lemma} 
\label{dec_ine} 
Let $\xi$ be an aperiodic positive integer-valued random variable with $\mathbb{E}(\xi^{1+\varepsilon})<\infty$, for some $\varepsilon>0$, and consider the renewal process $Y=Y\big(\xi,\rho(\xi)\big)$ defined in (\ref{defY}).
Then there exists $c_1=c_1(\xi,\varepsilon)\in(0,\infty)$ such that for all $n,m\in\mathbb{Z}_+$ and for every pair of events $A$ and $B$, with
\[
A\in\sigma(Y_i;\ 0\leq i\leq m)\ \ \ \textrm{ and } \ \ \ B\in\sigma(Y_i, \ i\geq m+n)
\]
we have
\begin{equation}
\label{e:decoup_upsilon}
\upsilon^\rho_\xi(A\cap B)\leq \upsilon^\rho_\xi(A)\upsilon^\rho_\xi(B)+c_1 n^{-\varepsilon}.
\end{equation}
\end{lemma}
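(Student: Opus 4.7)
The plan is to combine the Markov structure of the ``time-to-next-renewal'' chain $Z$ with the coupling bound from Theorem \ref{lind}. First, conditioning on $\mathcal{F}_m:=\sigma(Y_0,\dots,Y_m)$, the Markov property of $Z$ reduces the problem to the single random variable $Z_m$:
\[
\upsilon_\xi^\rho(B\mid\mathcal{F}_m)=f(Z_m),\qquad f(z):=\upsilon_\xi^z(B'),
\]
where $B'$ is $B$ re-indexed so as to be an event on the shifted process $(Y_{m+k})_{k\geq 0}$, involving only coordinates with index $\geq n$. By stationarity \eqref{e:Z_rho} the marginal law of $Z_m$ is $\rho$, so $\upsilon_\xi^\rho(B)=\bar f:=\int f\,d\rho$, whence
\[
\upsilon_\xi^\rho(A\cap B)-\upsilon_\xi^\rho(A)\,\upsilon_\xi^\rho(B)=\mathbb{E}\big[\mathbf{1}_A\,(f(Z_m)-\bar f)\big].
\]
Using $|\mathbf{1}_A|\leq 1$, it therefore suffices to prove $\mathbb{E}[|f(Z_m)-\bar f|]\leq c_1 n^{-\varepsilon}$.

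For the latter I would use the classical ``independent, then merge'' coupling of two renewal processes with deterministic delays $z$ and $z'$: run them independently until their first simultaneous renewal time $T_{z,z'}$, and let them coincide afterwards. Since $B'$ only probes coordinates with index $\geq n$, this coupling gives the pointwise estimate $|f(z)-f(z')|\leq \upsilon_\xi^{z,z'}(T_{z,z'}>n)$. Writing $\bar f=\int f(z')\,d\rho(z')$, a Jensen/Fubini step then yields
\[
\mathbb{E}\big[|f(Z_m)-\bar f|\big]\leq \iint \upsilon_\xi^{z,z'}(T_{z,z'}>n)\,d\rho(z)\,d\rho(z')=\upsilon_\xi^{\rho,\rho}(T>n),
\]
so that both starting delays are now jointly sampled from $\rho\otimes\rho$.

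Finally, I would invoke Theorem \ref{lind} with $\chi=\chi'=\rho$: its hypothesis $\mathbb{E}(\rho^\varepsilon)<\infty$ is exactly \eqref{finitudederho}, which follows from $\mathbb{E}(\xi^{1+\varepsilon})<\infty$. This gives $\mathbb{E}_\xi^{\rho,\rho}(T^\varepsilon)<\infty$, and Markov's inequality concludes with $c_1:=\mathbb{E}_\xi^{\rho,\rho}(T^\varepsilon)$. I expect the main subtlety to lie in the averaging step: a naive attempt to compare $f(Z_m)$ with $\bar f$ via a direct pathwise coupling of a $Z_m$-delayed chain with a $\rho$-delayed chain would require controlling $\mathbb{E}(T_{Z_m,\rho}^\varepsilon)$ as a (possibly ugly) function of $Z_m$. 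Averaging in $z'$ first, and only then invoking Theorem \ref{lind} with two stationary delays, sidesteps this issue and delivers the polynomial $n^{-\varepsilon}$ rate in one clean step; the same argument actually yields the two-sided bound $|\upsilon_\xi^\rho(A\cap B)-\upsilon_\xi^\rho(A)\upsilon_\xi^\rho(B)|\leq c_1 n^{-\varepsilon}$.
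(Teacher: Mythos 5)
Your proof is correct and takes a genuinely different decomposition from the paper's, while relying on the same key ingredient, Theorem \ref{lind}. The paper splits on $\{Z_m > n/2\}$ versus $\{Z_m \leq n/2\}$: on the large-$Z_m$ side it pays $\upsilon_\xi^\rho(\rho > n/2)$, and on the small side it shifts by $m+j$, compares a $\delta_0$-delayed chain to a $\rho$-delayed one, and pays $\upsilon_\xi^{\delta_0,\rho}(T>n/2)$, so that Theorem \ref{lind} is invoked with $\chi=\delta_0$, $\chi'=\rho$. You avoid the split entirely: you write the covariance as $\mathbb{E}[\mathbf{1}_A(f(Z_m)-\bar f)]$, Jensen-average in both starting delays, and invoke Theorem \ref{lind} once with $\chi=\chi'=\rho$. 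This yields the cleaner single-term constant $c_1=\mathbb{E}_\xi^{\rho,\rho}(T^\varepsilon)$ in place of the paper's $2^\varepsilon\mathbb{E}(\rho^\varepsilon)+2^\varepsilon\mathbb{E}_\xi^{\delta_0,\rho}(T^\varepsilon)$, and it gives the two-sided bound for free. One notational caveat you should fix: $Z_m$ is not $\mathcal{F}_m=\sigma(Y_0,\dots,Y_m)$-measurable (when $Y_m=0$, $Z_m$ depends on the future), so $\upsilon_\xi^\rho(B\mid\mathcal{F}_m)=f(Z_m)$ is not an identity of $\mathcal{F}_m$-measurable random variables. Condition instead on $\sigma(Z_0,\dots,Z_m)\supseteq\mathcal{F}_m$; since $A$ is measurable with respect to this larger $\sigma$-field and the Markov property of $Z$ holds with respect to it, the covariance identity $\upsilon_\xi^\rho(A\cap B)-\upsilon_\xi^\rho(A)\upsilon_\xi^\rho(B)=\mathbb{E}[\mathbf{1}_A(f(Z_m)-\bar f)]$ and everything after it go through unchanged.
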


\begin{proof} 
If $\upsilon^\rho_\xi(A)=0$ there is nothing to be proved and we assume 
henceforth that $\upsilon^\rho_\xi(A)>0$. 
Recall the definition of $Z$ in (\ref{defZ}). 
Using Markov's property we have
\begin{eqnarray}
\upsilon^\rho_\xi(A\cap B)&=&\upsilon^\rho_\xi\big(A\cap B\cap\{Z_m>n/2\}\big)+\upsilon^\rho_\xi\big(A\cap B\cap \{Z_m\leq n/2\}\big)\nonumber\\
&\leq&\upsilon^\rho_\xi\big(Z_m>n/2\big)+\upsilon^\rho_\xi(A)\hspace{-0.5cm}\sum_{\substack{0\leq i\leq\lfloor n/2\rfloor;\\ \upsilon^\rho_\xi(Z_m=i|A)>0}}  \hspace{-0.5cm}\upsilon^\rho_\xi\big(B|A, Z_m=i\big)\upsilon^\rho_\xi\big(Z_m=i|A\big)\nonumber\\
&\leq&\upsilon^\rho_\xi\big(Z_m>n/2\big)+\upsilon^\rho_\xi(A)\hspace{-0.1cm}\max_{0\leq j\leq\lfloor n/2\rfloor}\hspace{-0.1cm} \upsilon^{\delta_{m+j}}_\xi(B).
\label{dd}
\end{eqnarray}

Let us now compare $\upsilon_\xi^{\delta_{m+j}}(B)$ with $\upsilon_\xi^{\rho}(B)$, when $0\leq j\leq\lfloor{n}/{2}\rfloor$. 
Using that $\upsilon_\xi^{\delta_{m+j}}(B)=\upsilon_\xi^{\delta_0}\big(\theta_{m+j}(B)\big)$, the stationarity of $\rho$ and a standard coupling for Markov chains we have
\begin{eqnarray}\abs{\upsilon_\xi^{\delta_{m+j}}(B)-\upsilon_\xi^{\rho}(B)}&=&\abs{\upsilon_\xi^{\delta_0}\big(\theta_{m+j}(B)\big)-\upsilon_\xi^{\rho}\big(\theta_{m+j}(B)\big)}\nonumber\\ 
&{\leq}&\upsilon^{\delta_0, \rho}_\xi(T>n-j)\nonumber\\
&\leq&\upsilon^{\delta_0, \rho}_\xi(T>n/2).\label{ddd}\end{eqnarray}
By (\ref{dd}), (\ref{ddd}) and the fact that we have $Z_m\stackrel{d}=Z_0\stackrel{d}{=}\rho$
\begin{eqnarray}\upsilon^\rho_\xi(A\cap B)&\leq& \upsilon^\rho_\xi(A) \upsilon^{\rho}(B)+\upsilon^\rho_\xi(\rho>n/2)+\upsilon^{\delta_0, \rho}_\xi(T>n/2)\nonumber\\
&\leq& \upsilon^\rho_\xi(A) \upsilon^{\rho}(B)+{2^\varepsilon\mathbb{E}(\rho^\varepsilon)}n^{-\varepsilon}+2^\varepsilon\mathbb{E}^{\delta_0, \rho}_\xi(T^\varepsilon)n^{-\varepsilon},
\nonumber
\end{eqnarray}
where $\mathbb{E}(\rho^\varepsilon)$ and $\mathbb{E}^{\delta_0, \rho}_\xi(T^\varepsilon)$ are finite by (\ref{finitudederho}) and Theorem \ref{lind}, respectively.
Defining $c_1={2^\varepsilon\mathbb{E}(\rho^\varepsilon)}+2^\varepsilon\mathbb{E}^{\delta_0, \rho}_\xi(T^\varepsilon)$ concludes the proof.
\end{proof}

\section{The multiscale scheme}
\label{s:multiscale}

This section is divided into two parts.
The first one is dedicated to the control of the environment and the second  to the control of the probability of the occurrence of crossing events in large boxes.
Throughout the whole section we restrict ourselves to the case when $\xi$ is positive, integer-valued and aperiodic.
Moreover we assume that $\mathbb{E}(\xi^{1+\varepsilon})<\infty$ for a some $\varepsilon>0$ and denote $\rho=\rho(\xi)$ the respective stationary delay given in \eqref{e:stat_decay}.
This allows us to apply the results obtained in Section \ref{s:ren_pro} apply.
Since $\xi$ is integer-valued, we only work with the version of the model which is defined on the $\mathbb{Z}^2_+$-lattice and where the probability of opening edges are given by \eqref{e:integer_stretched}.

\subsection{Environments}
\label{multiscale1}
Let us now fix a sequence of lengths $L_0, L_1, L_2, \ldots$, called scales.
In order to choose the initial scale, $L_0$, let us start by fixing constants
\begin{eqnarray}
\label{defalp}\alpha\in\big(0,\tfrac{\varepsilon}{2}\big]\textrm{\ \ \  and \ \ \ } \gamma\in\big(1, 1+\tfrac{\alpha}{\alpha+2}\big).
\end{eqnarray} 
They will appear as exponents in several expressions below.
The exponent $\gamma$ is related to the rate of growth for the sequence of scales in which we will study the environment while $\alpha$ will provide the rate of decay of the probability that bad events (to be defined later) occur in each scale (see (\ref{Lk}) and (\ref{cotlenv})).

In order to define the sequence of scales, let us fix $L_0=L_0(\xi, \varepsilon, \alpha, \gamma)\in\mathbb{Z}_+$ sufficiently large so that
\begin{enumerate}[label=(\roman*)]
\item\label{L1} $L_0^{\gamma-1}\geq 3$,
\item \label{L2}$L_0^{\varepsilon-\alpha}\geq \mathbb{E}(\rho^\varepsilon)$ and
\item \label{L3}$L_0^{c_2}\geq c_1+1$, where $c_1$ is given by Lemma \ref{dec_ine} and $c_2=2+2\alpha-\gamma\alpha-2\gamma$.
(Notice that our bound for $\gamma$ in (\ref{defalp}) ensures that $c_2>0$.)
\end{enumerate}
Once $L_0$ is fixed, we can define recursively the sequence of scales $(L_k)_{k\in\mathbb{Z}_+}$ by 
\begin{eqnarray}\label{Lk}  L_k=L_{k-1}\lfloor L_{k-1}^{\gamma-1}\rfloor, \textrm{ for any }k\geq1.
\end{eqnarray}
Item \ref{L1} in the definition of $L_0$ together with (\ref{defalp}) and (\ref{Lk}) implies that these scales grow super-exponentially fast. 
In fact,
\begin{eqnarray}\label{cotaL}\left(\frac{2}{3}\right)^k L_{0}^{\gamma^k}\leq\cdots\leq\frac{2}{3}L_{k-1}^\gamma\leq L_k\leq L_{k-1}^\gamma\leq\cdots\leq L_0^{\gamma^k}.\end{eqnarray}
Items \ref{L2} and \ref{L3} are technical and will be used  to prove Lemma \ref{lmult1} below.

For $k\in\mathbb{Z}_+$, let us partition $\mathbb{R}_+$ into intervals of length $L_k$
\[
I_j^k:=\big[j L_k, (j+1)L_k\big), \textrm{ with } j\in\mathbb{Z}_+.
\]
The interval $I_j^k$ is called the $j$-th block at scale $k$. 
For $k \geq 1$, each block at scale $k$ can be partitioned into disjoint blocks at scale $k-1$:
\begin{equation}\label{firstandlast} I_j^k=\bigcup_{i \in l_{k,j}} I_i^{k-1},
\end{equation}
where the union runs over the set of indices
\begin{equation}
 l_{k,j} := \{ i \in \mathbb{Z}_+;\, I^{k-1}_i \cap I^k_j \neq \varnothing \} = \big\{ j\lfloor L_{k-1}^{\gamma-1}\rfloor, \cdots, (j+1)\lfloor L_{k-1}^{\gamma-1}\rfloor-1 \big\}
 \end{equation}
whose cardinality satisfies
\begin{equation}
\label{e:|lkj|}
    |l_{k,j}| = \lfloor L_{k-1}^{\gamma-1}\rfloor.
\end{equation}

Now fix an environment $\Lambda\subseteq \mathbb{Z}_+$. 
Blocks will be labeled either good or bad according to $\Lambda$ recursively as follows.
For $k=0$ declare the $j$-th block at scale $0$, $I_j^0$, good if $\Lambda\cap I_j^0\not=\varnothing$, and bad otherwise. 
Once the blocks at scale $k-1$ are all labeled we declare a block at scale $k$ bad if it contains at least two non-consecutive bad blocks at scale $k-1$, and good otherwise.
More precisely, for $j, k \in\mathbb{Z}_+$ consider the events $A^k_j$ defined recursively  by 
\begin{eqnarray} 
A^0_j&=&\{\Lambda\subseteq{\mathbb{Z}_+}; \Lambda\cap I_j^0=\varnothing\} \nonumber\\
A^k_j&=&\hspace{-0.5cm}\bigcup_{\substack{ i_1, i_2 \in l_{k,j} \\ \abs{ i_1-i_2 } \geq  2}}\hspace{-0.3cm} \left(A_{i_1}^{k-1}\cap A_{i_2}^{k-1}\right), \textrm{ for }k\geq 1.
\label{AA}
\end{eqnarray}
Sometimes we will write $\{I_j^k\textrm{ is bad}\}$ instead of $A_j^k$.
We will also write $\{I_j^k\textrm{ is good}\}$ for the complementary set of environments.
By \ref{L1}, $l_{k,j}$ has at least three elements, so that the union in \eqref{AA} always runs over nonempty collections of indices.

We now define
\[
p_k:=\upsilon^\rho_\xi(A_0^k)=\upsilon^\rho_\xi(A_j^k),
\] 
where the equality follows from the stationarity of $\rho$.

The next lemma establishes an upper bound for the $p_k$'s, which is a power law in $L_k$ with exponent $\alpha$. 

\begin{lemma}
\label{lmult1}
For every $k\in\mathbb{Z}_+$ we have  
\begin{eqnarray}\label{cotlenv} p_k\leq L_k^{-\alpha}.
\end{eqnarray}
\end{lemma}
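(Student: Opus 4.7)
The plan is to argue by induction on $k$, using Markov's inequality to get the base case and the decoupling Lemma \ref{dec_ine} to carry the inductive step.

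For the base case $k=0$, the event $A_0^0 = \{\Lambda \cap I_0^0 = \varnothing\}$ says that no renewal occurs in $[0, L_0)$. Under the stationary delay this is precisely $\{Z_0 \geq L_0\} = \{\rho \geq L_0\}$, so Markov's inequality gives $p_0 \leq \mathbb{E}(\rho^\varepsilon) L_0^{-\varepsilon}$, and assumption \ref{L2} on $L_0$ then yields $p_0 \leq L_0^{-\alpha}$.

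For the inductive step, assume $p_{k-1} \leq L_{k-1}^{-\alpha}$. First I would observe (by a straightforward induction on $k$) that $A_i^{k-1}$ is measurable with respect to $\sigma(Y_n;\, n \in I_i^{k-1})$. Then, from \eqref{AA} and the union bound,
\[
p_k \leq |l_{k,j}|^2 \max_{\substack{i_1, i_2 \in l_{k,j} \\ \abs{i_1-i_2}\geq 2}} \upsilon^\rho_\xi\bigl(A_{i_1}^{k-1}\cap A_{i_2}^{k-1}\bigr).
\]
Since $\abs{i_1 - i_2} \geq 2$, the intervals $I_{i_1}^{k-1}$ and $I_{i_2}^{k-1}$ are separated by at least $L_{k-1}$ indices, so Lemma \ref{dec_ine} applies and the right-hand probability is at most $p_{k-1}^2 + c_1 L_{k-1}^{-\varepsilon}$. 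Using the induction hypothesis together with $\varepsilon \geq 2\alpha$ from \eqref{defalp}, and $|l_{k,j}|^2 \leq L_{k-1}^{2(\gamma-1)}$ from \eqref{e:|lkj|}, this gives
\[
p_k \;\leq\; (1+c_1)\, L_{k-1}^{2(\gamma-1) - 2\alpha}.
\]

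To close the induction, use $L_k \leq L_{k-1}^\gamma$ to note that it suffices to show $(1+c_1)L_{k-1}^{2(\gamma-1)-2\alpha} \leq L_{k-1}^{-\gamma\alpha}$, which rearranges to $1+c_1 \leq L_{k-1}^{c_2}$ with $c_2 = 2+2\alpha-2\gamma-\gamma\alpha$. The constraint $\gamma < 1+\alpha/(\alpha+2)$ from \eqref{defalp} is exactly what makes $c_2 > 0$, and monotonicity combined with condition \ref{L3} on $L_0$ then yields $L_{k-1}^{c_2} \geq L_0^{c_2} \geq 1 + c_1$, completing the step.

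The main obstacle is not conceptual but rather the careful choreography of the three $L_0$-conditions \ref{L1}--\ref{L3} together with the exponent constraints in \eqref{defalp}: \ref{L1} ensures the partition \eqref{firstandlast} always has at least three subblocks so that the pair $(i_1,i_2)$ with $\abs{i_1-i_2}\geq 2$ exists, \ref{L2} powers the base case, and \ref{L3} absorbs the constant $c_1$ coming from decoupling. The cushion $\gamma < 1+\alpha/(\alpha+2)$ is exactly tight to ensure the scale-doubling of $\alpha$ in $p_{k-1}^2$ beats the combinatorial factor $L_{k-1}^{2(\gamma-1)}$ after moving from scale $k-1$ to scale $k$.
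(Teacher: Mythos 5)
Your proof is correct and follows essentially the same route as the paper's: Markov's inequality on $\rho^\varepsilon$ combined with condition \ref{L2} for the base case, then union bound plus the decoupling inequality \eqref{e:decoup_upsilon} to obtain the contraction $p_k \leq L_{k-1}^{2(\gamma-1)}\big[p_{k-1}^2 + c_1 L_{k-1}^{-\varepsilon}\big]$, absorbing the decoupling error via $\varepsilon \geq 2\alpha$, and closing the induction with $L_k \leq L_{k-1}^\gamma$ and condition \ref{L3}. The only stylistic difference is that you bound the sum over pairs by $|l_{k,j}|^2$ times a maximum while the paper bounds it term by term, and you explicitly flag the measurability $A_i^{k-1} \in \sigma(Y_n; n \in I_i^{k-1})$, which the paper leaves implicit; neither changes the substance.
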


\begin{proof}
We proceed by induction on $k$. 
Using Markov's inequality for $\rho^\varepsilon$, we get 
\[
p_0=\upsilon^\rho_\xi(A_0^0)\stackrel{\eqref{defZ}}{=}\upsilon^\rho_\xi(Z_0>L_0)\stackrel{\eqref{e:Z_rho}}{=}\mathbb{P}(\rho>L_0)\leq \frac{\mathbb{E}(\rho^\varepsilon)}{L_0^{\varepsilon}},
\]
which, together with \ref{L2}, implies $p_0\leq L_0^{-\alpha}$.
 
Using \eqref{e:|lkj|} and the decoupling inequality \eqref{e:decoup_upsilon} we have
\begin{equation} 
\label{e:contract_relation}
p_{k+1} = \upsilon^\rho_\xi(A_0^{k+1}) \stackrel{\eqref{AA}}{\leq} \sum_{\substack{ i_1, i_2 \in\, l_{k+1,0} \\ \abs{ i_1-i_2 } \geq  2}} \upsilon^\rho_\xi(A_{i_1}^k\cap A^k_{i_2})
\stackrel{\eqref{e:decoup_upsilon}, \eqref{e:|lkj|}}{\leq}  L_k^{2(\gamma-1)}\big[p_k^2+c_1L_k^{-\varepsilon}\big]
\end{equation}
which is a recursive inequality relating $p_{k+1}$ to $p_k$.

Now assume that for some $k\in\mathbb{Z}_+$, $p_k\leq L_k^{-\alpha}$.
Plugging thins bound into \eqref{e:contract_relation}, we get
\begin{equation}
\label{e:induction_pk}
p_{k+1} \leq L_k^{2\gamma-2}(L_k^{-2\alpha}+c_1L_k^{-\varepsilon})
\stackrel{\eqref{defalp}}{\leq} (1+c_1)L_k^{2\gamma-2-2\alpha}
\end{equation}
which implies
\[
 \dfrac{p_{k+1}}{L_{k+1}^{-\alpha}}\leq (1+c_1)L_k^{2\gamma-2-2\alpha}L_{k+1}^\alpha\leq(1+c_1)L_k^{2\gamma-2-2\alpha+\gamma\alpha}\stackrel{\ref{L3}}{\leq}1.
 \]
That is to say that $p_{k+1}$ is also bounded above by $L_{k+1}^{-\alpha}$.
This concludes the proof.\end{proof}

\begin{remark}
\label{r:decoupling_bound}
The crucial assumption imposed on the environment for the previous lemma to hold is that it satisfies the decoupling inequality \eqref{e:decoup_upsilon}.
Although we state our results for a sequence of $i.i.d.$ $\xi$'s, the reader can easily verify that our arguments apply as soon as this decoupling inequality is satisfied and that we can assure that $p_0 \leq L_0^{-\alpha}$ (which, in our case was accomplished by the definition of $L_0$).
\end{remark}

\subsection{Crossings events}
\label{s:multiscale2}

In this section we will study the probability of crossing events within certain rectangles of $\mathbb{Z}_+^2$.
Our goal is to show that, for large enough $p$, rectangles whose bases project onto good blocks are crossed with overwhelming probability.
In the next section we will use this crossings as building blocks to construct an infinite cluster.

Before we state our results, let us introduce the relevant notation.
Let $a, b, c, d \in \mathbb{Z}_+$ with $a<b$ and $c<d$. 
Denote by
\begin{eqnarray}
R=R\big([a,b)\times[c,d)\big)\label{rectangle}
\end{eqnarray}
the subgraph of $\mathbb{Z}^2_+$ whose vertex and edge sets are given repectively by
\begin{eqnarray}
V(R)&=&[a,b]\times[c,d]\textrm{\ \ \ and }\nonumber\\
E(R)&=&\big\{\{(x,y),(x+i,y+1-i)\};\ (x,y)\in[a, b-1]\times[c, d-1], \ i\in\{0,1\}\big\},\nonumber
\end{eqnarray}
where $[a,b]$ denotes the set of all integers between $a$ and $b$, including them both. 
Roughly speaking, $R$ is the rectangle $[a,b]\times[c,d]$ with the edges along the right and top sides removed as shown in Figure \ref{ilustdoret}.
\begin{figure}[htb!]
	\centering
	\begin{tikzpicture}[scale=0.8, every node/.style={scale=0.8}]
	\draw[help lines, step=0.5cm, dotted](-0.2,-0.2) grid (5.2, 3.2);
	\draw[->](0.3,-0.5) -- (5.2,-0.5);
	\draw[->](-0.5,0) -- (-0.5,3.2);
	\draw (1,-0.45) -- (1,-0.55); \node at (1,-0.7) {$a$};
	\draw (4,-0.45) -- (4,-0.55); \node at (4,-0.7) {$b$};
	\draw (-0.45,0.5) -- (-0.55,0.5);\node at (-0.7,0.5) {$c$};
	\draw (-0.45,2.5) -- (-0.55,2.5);\node at (-0.7,2.5) {$d$};
	\foreach \x  in {0,1,2,3,4,5,6} \foreach \y in {0,1,2,3,4} \filldraw (1+0.5*\x,0.5+0.5*\y ) circle(1.5pt);
	\foreach \x  in {0,1,2,3,4,5}\draw[thick] (1+0.5*\x,0.5)--(1+0.5*\x,2.5);
	\foreach \x  in {0,1,2,3}\draw[thick] (1,0.5+0.5*\x )--(4,0.5+0.5*\x );
	\end{tikzpicture}
	\caption{\label{ilustdoret}Illustration of the rectangle $R\big([a,b)\times[c,d)\big)$.}
\end{figure}
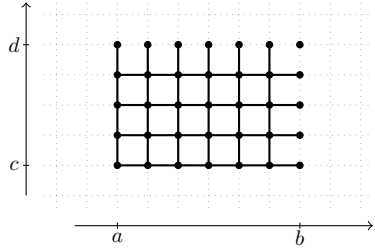

We define horizontal and vertical crossing events in $R$ as follows
\begin{eqnarray} 
\mathcal{C}_h(R)&=&\big\{\{a\}\times[c,d]\leftrightarrow\{b\}\times[c,d]\textrm{ in }R\big\},
\label{crossingh}\\
\mathcal{C}_v(R)&=&\big\{[a,b]\times\{c\}\leftrightarrow[a,b]\times\{d\}\textrm{ in }R\big\},
\label{crossingv}
\end{eqnarray} 
where $\{A\leftrightarrow B \textrm{ in } R\}$ is the event that there are sites $v\in A$ and $w\in B$ that are linked by an open path contained in $R$.

We now specify the rectangles that we will attempt to cross.
Their bases will be blocks at some scale $k$.
Moreover, they will be very elongated on the vertical direction meaning that their heights $H_k$ will be much larger than the length of their bases $L_k$.
More precisely, fix a constant
\begin{eqnarray}
\label{mu}
\mu\in\big(\tfrac{1}{\gamma},1\big)\end{eqnarray} 
and define recursively the sequence $(H_k)_{k\in\mathbb{Z}_+}$ by
\[
H_0=100 \,\,\textrm{ and } \,\, H_k=2\lceil \exp(L_k^\mu)\rceil H_{k-1}, \textrm{ for } k\geq 1.
\]
The choice $H_0=100$ is arbitrary and we could have used any other positive integer.
For $i, j, k\in\mathbb{Z}_+$ denote 
\begin{eqnarray}
\label{defC}
C_{i, j}^k:=\mathcal{C}_h\Big(\big(I^k_i\cup I^k_{i+1}\big)\times\big[j H_k, (j+1)H_k\big)\Big)
\end{eqnarray}
\begin{eqnarray}
\label{defD}
D_{i, j}^k:=\mathcal{C}_v\Big(I^k_i\times\big[j H_k, (j+2)H_k\big)\Big).
\end{eqnarray}
These events are illustrated in Figure \ref{fig:CeD}.

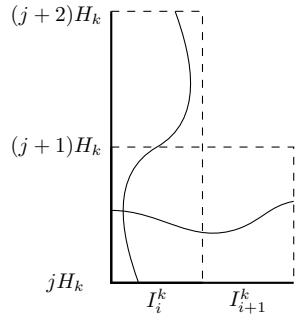
\begin{figure}[htb!]
\centering
\begin{tikzpicture}[scale=0.6, every node/.style={scale=0.8}]
\draw[thick](0,3)--(0,0) -- (4,0);
\draw[dashed](0,3)--(4,3) -- (4,0);
\draw[thick](0,6)--(0,0)--(2,0);
\draw[dashed](0,6)--(2,6) -- (2,0);
\node at (1,-0.4) {$I_i^{k}$};
\node at (3,-0.4) {$I_{i+1}^{k}$};
\node at (-1,0) {$j H_k$};
\node at (-1.2,3) {$(j+1)H_k$};
\node at (-1.2,5.9) {$(j+2)H_k$};
\draw (0,1.6) to [out=0,in=-160] (2.8,1.2) to [out=20,in=190] (4,1.8);
\draw (0.6,0) to [out=110,in=210] (1,3) to [out=30,in=290] (1.4,6);
\end{tikzpicture}
\caption{\label{fig:CeD}Illustration of the events $C_{i, j}^k$ and $D_{i, j}^k$.}
\end{figure}

Let us define, for every $i, j, k \in \mathbb{Z}_+$ and $p\in(0,1)$,
\begin{equation}
\label{eq:def_qk}
q_k(p; i,j):=\max\Bigg\{\underset{\substack{ \Lambda; \ I^k_i \textrm{ and }\\ I^k_{i+1}\textrm{are good}}}{\max}\mathbb{P}^\Lambda_p\big((C_{i,j}^k)^c\big), \underset{\substack{ \Lambda; \ I^k_i \textrm{ is} \\ \textrm{good}}}{\max} \mathbb{P}^\Lambda_p\big((D_{i, j}^k)^c\big) \Bigg\}.
\end{equation}
Translation invariance, allows us to write, for every $k \in \mathbb{Z}_+$, 
\begin{eqnarray}
\label{eq:qk_inv}
q_k(p)\vcentcolon=q_k(p;0,0) = q_k(p; i, j), \textrm{ for any }i, j \in \mathbb{Z}_+.
\end{eqnarray}
There are only finitely many realizations of $\Lambda$ inside the blocks $I^{k}_0$ and $I^{k}_1$ that make these blocks good.
Moreover, for each one of these realizations the probabilities appearing in \eqref{eq:def_qk} vanish as $p$ increases.
Therefore, for any fixed $k$, $q_k(p)\to 0$ as $p\to 1$.

We wish to show that, for sufficiently small $p$, the sequence $q_k(p)$ vanishes fast as $k$ increases.
For that, let us fix a positive constant $\beta$ satisfying
\begin{eqnarray}
\beta \in (\gamma\mu-\gamma+1, 1),
\label{beta}
\end{eqnarray}
which is possible because \eqref{mu} yields $\gamma\mu-\gamma<0$.
We then have:
\begin{lemma}
\label{l3} 
There exist $c_3=c_3(\gamma, L_0, \mu, \beta)\in\mathbb{Z}_+$ and $p=p(\gamma, L_0, \mu, \beta, c_3)$ sufficiently close to 1 such that
\[
q_k(p) \leq \exp(-L_k^{\beta}), \textrm{ for any }k\geq c_3.
\]
\end{lemma}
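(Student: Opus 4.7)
The proof is by induction on $k$. For the base case, one fixes $c_3$ large enough that all the later asymptotic inequalities hold from $k=c_3$ on, and then picks $p$ close enough to $1$ to force $q_{c_3}(p)\leq \exp(-L_{c_3}^\beta)$. This is possible because, conditional on $\Lambda$ restricted to $I_0^{c_3}\cup I_1^{c_3}$ (of which only finitely many realizations keep these blocks good), the events $C_{0,0}^{c_3}$ and $D_{0,0}^{c_3}$ depend on only finitely many edges and their probabilities tend to $1$ as $p\to 1$.

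The heart of the argument is the inductive step: assuming $q_k(p)\leq \exp(-L_k^\beta)$, derive $q_{k+1}(p)\leq \exp(-L_{k+1}^\beta)$. Fix an environment $\Lambda$ under which the scale-$(k+1)$ blocks relevant to $C_{i,j}^{k+1}$ (or $D_{i,j}^{k+1}$) are good, so that within each such block the bad scale-$k$ sub-blocks form at most two consecutive indices. Tile the big rectangle by scale-$k$ sub-rectangles of size $L_k\times H_k$, producing a grid of roughly $2\lfloor L_k^{\gamma-1}\rfloor$ columns and $H_{k+1}/H_k\geq 2\exp(L_{k+1}^\mu)$ rows. The strategy for $C_{i,j}^{k+1}$ is to build the horizontal crossing by combining three ingredients along a suitable horizontal strip: within each maximal run of consecutive good scale-$k$ columns, chain scale-$k$ horizontal crossings $C^k$, which succeed along a given strip with probability at least $1-L_k^{\gamma-1}q_k$ by the inductive hypothesis and a union bound; across each of the (at most two) bad regions of horizontal width $w\leq 2L_k$, use the event that some row in $[jH_{k+1},(j+1)H_{k+1})$ has all $w$ horizontal edges open, which happens with probability at least $1-(1-p^{w})^{H_{k+1}}$; and stitch the pieces to the correct height via scale-$k$ vertical crossings $D^k$ in the good columns immediately adjacent to each bad region, which allow a free shift to whichever row supports the direct bad-region crossing. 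The vertical case $D_{i,j}^{k+1}$ is simpler: inside $I_i^{k+1}$ pick any good scale-$k$ sub-column (which exists since the block is good) and chain together the $\sim H_{k+1}/H_k$ overlapping $D^k$ events along it. All scale-$k$ events entering the construction live on pairwise edge-disjoint sub-rectangles and are therefore independent conditional on $\Lambda$.

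The main obstacle will be checking that the resulting estimate closes back into $\exp(-L_{k+1}^\beta)$. The contribution from the good-stretch part is at most $(L_k^{\gamma-1}q_k)^{H_{k+1}/H_k}\leq \exp(-\tfrac12 L_k^\beta\exp(L_{k+1}^\mu))$, which overwhelms $\exp(-L_{k+1}^\beta)$ since $\exp(L_{k+1}^\mu)$ grows super-polynomially in $L_{k+1}$. The contribution from each bad region is $\exp(-H_{k+1}p^{w})\leq \exp(-2\exp(L_{k+1}^\mu)p^{2L_k})$; here the inequality $\gamma\mu>1$ guaranteed by \eqref{mu} is essential, since it forces $L_{k+1}^\mu\sim L_k^{\gamma\mu}$ to dominate $L_k$, making $\exp(L_{k+1}^\mu)p^{2L_k}\to\infty$ and ultimately forcing $H_{k+1}p^{2L_k}\geq L_{k+1}^\beta$ once $k\geq c_3$ is large and $p$ sufficiently close to $1$. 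The genuinely delicate step is the geometric construction of the detour around each bad region: the $C^k$, $D^k$ and direct edge-crossing events have to be arranged so as to be edge-disjoint and chainable, which is what makes the independence-based estimate above go through; the parameter hierarchy in \eqref{defalp}, \eqref{mu}, \eqref{beta} is then precisely what allows the induction to close.
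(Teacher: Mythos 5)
There is a genuine gap, and it is in the place you describe as ``simpler,'' namely the inductive estimate for the vertical crossing $D^{k+1}_{0,0}$. You propose to pick a single good scale-$k$ sub-column of $I_0^{k+1}$ and chain $\sim H_{k+1}/H_k$ overlapping events $D^k_{i,j}$ along it. But $H_{k+1}/H_k = 2\lceil \exp(L_{k+1}^\mu)\rceil$, so this chain has $\sim \exp(L_{k+1}^\mu)$ links, each with failure probability bounded (only from above) by $q_k \approx \exp(-L_k^\beta)$. Since $L_{k+1}^\mu \asymp L_k^{\gamma\mu}$ and $\gamma\mu>1>\beta$ by \eqref{mu} and \eqref{beta}, the product $\exp(L_{k+1}^\mu)\cdot \exp(-L_k^\beta)$ diverges. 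Consequently, even under the most favorable (independent or FKG) assumptions, the probability that the whole chain succeeds is roughly $(1-q_k)^{\exp(L_{k+1}^\mu)}\approx\exp(-\exp(L_{k+1}^\mu - L_k^\beta))\to 0$, not $\geq 1-\exp(-L_{k+1}^\beta)$. One fixed column is not enough: you must exploit the fact that there are $\sim L_k^{\gamma-1}/2$ good columns available. This is exactly what the paper's Lemma \ref{l5} does, via a Peierls-type argument in the renormalized $L_k\times H_k$ block lattice: a failure of the vertical crossing forces a dual blocking path of length at least $\ell_k \sim L_k^{\gamma-1}/2$, made of closed renormalized sites, and a bounded-dependence counting argument (each renormalized site depends on at most $7$ others) yields a failure bound of order $\exp(L_{k+1}^\mu - c_8 L_k^{\beta+\gamma-1})$. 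The choice of $\beta$ in \eqref{beta} is precisely what makes $\beta+\gamma-1 > \max\{\gamma\mu,\gamma\beta\}$, so this closes the induction; your single-column argument cannot, and no strengthening of the inductive hypothesis to a larger exponent can rescue it.

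A related but less central issue affects your treatment of $C^{k+1}_{0,0}$. You allow the direct crossing of the bad region to sit in any of the $H_{k+1}$ rows and then ``free-shift'' to it via $D^k$'s; but if that row lies far from the strip where the $C^k$'s were chained, the stitching chain of $D^k$'s again has $\sim \exp(L_{k+1}^\mu)$ links, reintroducing the same divergence, and your stated estimate $\exp(-H_{k+1}p^w)$ does not account for it. The paper's Lemma \ref{l4} avoids this by confining each crossing attempt to a band of height $2H_k$ and requiring the bad-region crossing to occur on the \emph{bottom row} of that band (the events $B^k_0,B^k_1$), paying a factor $e^{-8L_k}$ per band but compensating with the $\exp(L_{k+1}^\mu)$ independent bands; $\gamma\mu>1$ then makes $\exp(L_{k+1}^\mu-9L_k)\to\infty$, which is what closes the estimate. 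A further small point: the scale-$k$ crossing events in your construction are not on pairwise edge-disjoint rectangles (e.g.\ $C^k_{i,0}$ and $C^k_{i+1,0}$ overlap, as do $C^k_{i,0}$ and $D^k_{i,0}$), so the independence you invoke should instead be the FKG inequality applied to increasing events, as the paper does in \eqref{PG_0}.
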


The proof of this lemma is a straightforward consequence of the two following results.

\begin{lemma}
\label{l4} 
Let $p>1/2$.
There exists $c_4=c_4(\gamma, L_0,\mu, \beta)\in\mathbb{Z}_+$, such that for all $k\geq c_4$ the following holds:
\[
\text{if $q_k(p) \leq \exp({-L_k^\beta})$, then\, $\mathbb{P}^\Lambda_p\big((C_{0,0}^{k+1})^c\big)\leq \exp({-L_{k+1}^\beta})$}
\]
for every environment $\Lambda \in \{\text{$I_0^{k+1}$ is good}\} \cap \{\text{$I_1^{k+1}$ is good}\}$.
\end{lemma}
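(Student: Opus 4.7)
I will construct an open horizontal crossing of $R := (I^{k+1}_0\cup I^{k+1}_1)\times[0,H_{k+1})$ by combining scale-$k$ crossings through good sub-blocks with straight open ``extended bridges'' across each bad run, exploiting the super-exponential growth of $H_k$. Write the base of $R$ as $J_0\sqcup\cdots\sqcup J_{N-1}$ with $N:=2\lfloor L_k^{\gamma-1}\rfloor$ and partition its height into $M:=2\lceil\exp(L_{k+1}^\mu)\rceil$ strips $S_j:=[jH_k,(j+1)H_k)$. Because both $I^{k+1}_0$ and $I^{k+1}_1$ are good at scale $k+1$, the bad $k$-sub-blocks in the base form at most two \emph{bad runs} of at most two consecutive indices each (one per $(k+1)$-block), each of horizontal width at most $2L_k$ and flanked by good sub-blocks.

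For each bad run $B$, call an \emph{extended bridge at row $y$} an open horizontal path at height $y$ spanning $B$ together with its two flanking good sub-blocks (using at most $4L_k$ horizontal edges). Declare an even-indexed strip $S_j$ \emph{good} if (a)~$C^k_{i,j}$ holds for every adjacent good pair $(J_i,J_{i+1})$, (b)~$D^k_{i,j}$ holds for every good $J_i$, and (c)~every bad run admits an extended bridge at some row in $S_j$. On the event that some even-indexed $S_j$ is good, a planarity argument produces a full horizontal crossing of $R$ contained in $S_j\cup S_{j+1}$: consecutive $C^k_{i,j}$ and $C^k_{i+1,j}$ crossings glue through their forced intersection with $D^k_{i+1,j}$ in the shared good sub-block $J_{i+1}\times S_j$, and each extended bridge at row $y\in S_j$ crosses the bottom-to-top $D^k$ paths in its two flanking good sub-blocks. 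Hence $C^{k+1}_{0,0}$ holds.

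The probability estimate proceeds in two layers. \emph{Bridges.} For a fixed bad run, a given row carries an extended bridge with probability at least $p^{4L_k}$ (it uses only horizontal edges, each open with probability $p$ independently), so the expected number of bridge-carrying rows is at least $p^{4L_k}H_{k+1}$. Since $\log H_{k+1}\ge L_{k+1}^\mu\sim L_k^{\gamma\mu}$ dominates the linear term $4L_k\log(1/p)$ (using $\gamma\mu>1$ from \eqref{mu}), this expectation is super-polynomial in $L_k$. Bridge events in disjoint even-indexed strips are independent, and a Chernoff-type argument yields that the number $K$ of even-indexed strips carrying an extended bridge for \emph{every} bad run satisfies $K\ge L_k^{\gamma\beta}$ except with probability at most $\tfrac12\exp(-L_{k+1}^\beta)$. \emph{Scale-$k$ events.} By the choice of even-indexed strips, the collections of edges underlying (a)--(b) in distinct strips are disjoint, so the corresponding per-strip events are independent; moreover, modulo the negligible overlap between bridge edges in the flanking good sub-blocks and $C^k$ edges (which can be absorbed via FKG), these events are also independent of the bridge events. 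Each such strip satisfies (a) and (b) with probability at least $1-2Nq_k(p)\ge 1-4L_k^{\gamma-1}\exp(-L_k^\beta)\ge 1/2$ for $k$ large, by the inductive hypothesis. Hence, conditional on the bridge structure, the probability that none of the $K$ bridge-carrying strips is also good is at most $2^{-K}\le 2^{-L_k^{\gamma\beta}}\le \tfrac12\exp(-L_{k+1}^\beta)$, where the last inequality uses $L_{k+1}^\beta\le L_k^{\gamma\beta}$ (following from $L_{k+1}\le L_k^\gamma$). Summing the two error terms gives the desired bound.

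\textbf{Main obstacle.} The most delicate points are the planarity gluing of extended bridges to the vertical $D^k$ crossings in the flanking good sub-blocks (which requires setting up the bridge length so that its endpoints lie safely inside the good sub-blocks, forcing the intersection with the bottom-to-top $D^k$ path), and handling the small overlap between bridge edges and the scale-$k$ events so that the product bound $2^{-K}$ survives up to harmless constants; these are what force the bridge to be ``extended'' rather than to cover only the bad run itself.
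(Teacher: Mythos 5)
Your geometric construction — bridging each bad run with a straight horizontal open path through the flanking good sub-blocks, crossing good sub-blocks with the scale-$k$ events $C^k$ and $D^k$, gluing by planarity, and exploiting the height $H_{k+1}$ to get many attempts in disjoint horizontal bands — is the same construction the paper uses. Where you diverge is in the probabilistic accounting. The paper packs \emph{everything that must happen in a single band} (the two bridge events $B^k_0, B^k_1$ at the bottom row together with all the needed $C^k_{i,0}$ and $D^k_{i,0}$) into one increasing event $G^{k+1}_0$, uses FKG once to get $\mathbb{P}^\Lambda_p(G^{k+1}_0)\geq (1-q_k)^{4\lfloor L_k^{\gamma-1}\rfloor} e^{-8L_k}\geq e^{-9L_k}$ for $k$ large, and then uses independence of the $\sim\exp(L_{k+1}^\mu)$ disjoint bands, so the failure probability is $(1-e^{-9L_k})^{\exp(L_{k+1}^\mu)}$; the condition $\gamma\mu>1$ from \eqref{mu} makes this doubly exponentially small, comfortably below $\exp(-L_{k+1}^\beta)$. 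You instead split the estimate into a bridge layer (Chernoff count $K$ of bridge-carrying strips) and a scale-$k$ layer (probability $\geq 1/2$ per strip, giving $2^{-K}$). Both routes can be made to work, but the paper's single-FKG-then-independence scheme avoids the conditioning you have to do when bridge edges and $C^k$ edges overlap, and gets a much stronger bound with no extra effort.

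Two concrete problems with your write-up. First, the closing inequality $2^{-L_k^{\gamma\beta}}\le \tfrac12\exp(-L_{k+1}^\beta)$ is false as justified: it is equivalent to $L_k^{\gamma\beta}\ln 2\ge L_{k+1}^\beta+\ln 2$, and from \eqref{cotaL} one only has $(\tfrac23)^\beta L_k^{\gamma\beta}\le L_{k+1}^\beta\le L_k^{\gamma\beta}$, so knowing $L_{k+1}^\beta\le L_k^{\gamma\beta}$ is not enough (indeed $\ln 2<1$). This is cosmetic — your Chernoff step has enormous slack since $\mathbb{E}[K]$ is of order $\exp(cL_k^{\gamma\mu})$ with $\gamma\mu>1$, so you should simply target $K\ge 2L_k^{\gamma\beta}$ (or $K\ge L_k^{\gamma\beta}/\ln 2+1$) — but as stated the step is wrong. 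Second, ``absorbed via FKG'' is doing real work that you should make explicit: the correct statement is that for each even $j$ both $\mathcal{G}_j$ (bridges in strip $j$) and $\mathcal{H}_j$ (conditions (a)--(b) in strip $j$) are increasing and supported on edge sets disjoint from those of $(\mathcal{G}_{j'},\mathcal{H}_{j'})$ for $j'\ne j$ even, so $\mathbb{P}(\bigcap_{j\in\mathcal{K}}\mathcal{H}_j^c\mid \sigma(\mathcal{G}_i)_i)=\prod_{j\in\mathcal{K}}\mathbb{P}(\mathcal{H}_j^c\mid\mathcal{G}_j)\le \prod_{j\in\mathcal{K}}\mathbb{P}(\mathcal{H}_j^c)$ by FKG. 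Spelling this out is essential because the bridge and $C^k$ events genuinely share edges in the flanking good sub-blocks, and without the increasing-events argument the $2^{-K}$ factorization would not be justified.
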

The assumption $p>1/2$ is not important and was made just for convenience. 

\begin{lemma}
\label{l5}
There exists $c_5=c_5(\gamma, L_0,\mu, \beta)\in\mathbb{Z}_+$, such that for all $k\geq c_5$ the following holds:
\[
\text{if $q_k(p) \leq \exp({-L_k^\beta})$, then\, $\mathbb{P}^\Lambda_p\big((D_{0,0}^{k+1})^c\big)\leq \exp({-L_{k+1}^\beta})$,}
\]
for every environment $\Lambda \in \{\text{$I_0^{k+1}$ is good}\}$.
\end{lemma}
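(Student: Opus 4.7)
The plan is to follow the blueprint of Lemma \ref{l4}, but exchange the roles of horizontal and vertical directions so as to build a \emph{vertical} crossing of the tall rectangle $I_0^{k+1} \times [0, 2H_{k+1})$ from scale-$k$ crossings. The gain in the decay---from $\exp(-L_k^\beta)$ at scale $k$ to $\exp(-L_{k+1}^\beta) = \exp(-L_k^{\gamma\beta})$ at scale $k+1$---will be amplified by the horizontal redundancy inside the good block $I_0^{k+1}$: among its $M := \lfloor L_k^{\gamma-1}\rfloor$ sub-blocks $I_i^k$, at most two consecutive ones are bad, so there are many pairs of adjacent good sub-blocks to work with.

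The first step is purely deterministic: I would show that whenever $I_{i^*}^k \subset I_0^{k+1}$ and $I_{i^*+1}^k$ are both good, the simultaneous occurrence of $D_{i^*, j}^k$ for $j = 0, \ldots, N-2$ and $C_{i^*, j}^k$ for $j = 1, \ldots, N-2$ (where $N := \lceil 2H_{k+1}/H_k \rceil$) implies $D_{0,0}^{k+1}$. The connectivity is forced by the classical two-dimensional fact that an open vertical crossing and an open horizontal crossing of the same rectangle must intersect: the overlapping pair $D_{i^*, j}^k$ and $D_{i^*, j+1}^k$ both traverse the slab $I_{i^*}^k \times [(j+1)H_k, (j+2)H_k)$ vertically, while $C_{i^*, j+1}^k$ (restricted to $I_{i^*}^k$) traverses the same slab horizontally; hence all three lie in a single open cluster, and iterating over $j$ produces a full vertical crossing of $I_{i^*}^k \times [0, 2H_{k+1})$.

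For the probabilistic estimate I would phrase failure of $D_{0,0}^{k+1}$ on the $M \times N$ renormalized grid of sub-blocks $I_i^k \times [jH_k, (j+1)H_k)$: a site is declared \emph{hard} when any of the $O(1)$ surrounding scale-$k$ $D$- or $C$-crossings fails, and the deterministic step together with two-dimensional duality shows that $(D_{0,0}^{k+1})^c$ forces the existence of a horizontal ``barrier'' of hard sites of length $\Omega(M)$ traversing $I_0^{k+1}$. Since hardness depends only on $O(1)$ scale-$k$ events in a bounded neighborhood, it is finitely dependent; by a thinning (or a Liggett-Schonmann-Stacey stochastic domination) any specified barrier of length $\ell$ is entirely hard with probability at most $(C_1 q_k(p))^{c\ell}$. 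Summing over the $\leq C_0^\ell$ barriers of length $\ell$ from each of the $O(N)$ starting positions,
\[
\mathbb{P}_p^\Lambda\bigl((D_{0,0}^{k+1})^c\bigr) \;\leq\; C_2\, N\, \bigl(C_0\, C_1^{c}\, q_k(p)^{c}\bigr)^M \;\leq\; \exp\bigl(L_k^{\gamma\mu} - c'\, L_k^{\gamma-1+\beta}\bigr),
\]
for $p$ close enough to $1$. The right-hand side is $\leq \exp(-L_{k+1}^\beta)$ for $k$ large precisely when $\gamma-1+\beta$ strictly dominates both $\gamma\beta$ and $\gamma\mu$, which translates into $\beta < 1$ and $\beta > \gamma\mu - \gamma + 1$---exactly the two strict inequalities defining \eqref{beta}.

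The main obstacle, I expect, is the careful set-up of the renormalized duality. Since adjacent $D$'s share edges and adjacent $C$'s share edges, the hardness indicators are genuinely correlated; the argument must therefore control these short-range correlations by either a Liggett-Schonmann-Stacey domination or a thinning to a sparse sub-family of independent hard sites along the barrier. The shape of the condition $\beta > \gamma\mu - \gamma + 1$ in \eqref{beta} then appears as exactly the balance between the vertical prefactor $\exp(L_{k+1}^\mu)$ (from the height $H_{k+1}$) and the gain $q_k(p)^{cM}$ from the length-$M$ horizontal barrier provided by the redundancy $M \sim L_k^{\gamma-1}$.
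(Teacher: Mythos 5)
Your proposal follows essentially the same route as the paper: renormalize $I_0^{k+1}\times[0,2H_{k+1})$ into a grid of scale-$k$ cells, declare a cell open when $C_{i,j}^k\cap D_{i,j}^k$ occurs, apply a Peierls-type duality argument for a horizontal barrier of closed cells, correct for the short-range dependence of neighboring cells, and close with exactly the balance $\beta+\gamma-1>\max\{\gamma\beta,\gamma\mu\}$ afforded by \eqref{beta}. The only implementation differences are minor: the paper sidesteps the complication of a barrier running through a bad column by noting that one entire half of $I_0^{k+1}$ (of width $\ell_k=\lfloor\tfrac12\lfloor L_k^{\gamma-1}\rfloor\rfloor-1$) consists solely of good scale-$k$ blocks and restricts the Peierls count to that half, and it handles the finite dependence by the elementary count that any chain of $n+1$ cells contains $\lfloor n/7\rfloor$ mutually independent ones rather than invoking a Liggett--Schonmann--Stacey domination.
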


We now show how Lemma \ref{l3} follows from Lemmas \ref{l4} and \ref{l5}.

\begin{proof}[Proof of Lemma \ref{l3}] 
Let $c_3:=\max\{c_4, c_5\}$ and choose $p=p(\gamma, L_0, \mu, \beta, c_3)<1$ such that $q_{c_3}(p)\leq \exp(-L_{c_3}^\beta)$.
Lemmas \ref{l4} and \ref{l5} imply that $q_k(p) \leq \exp(-L_k^{\beta}),$  for any $k\geq c_3$.
\end{proof}

Next we present the proofs of Lemmas \ref{l4} and \ref{l5}. 

\begin{proof}[Proof of Lemma \ref{l4}] 
Fix an environment $\Lambda$ for which $I_0^{k+1}$ and $I_1^{k+1}$ are good blocks. 
Both $I_0^{k+1}$ and $I_1^{k+1}$ may contain at most two bad blocks at scale $k$ in which case they must be adjacent.
Even though it may seem hard to cross these bad blocks, the elongated shape of the rectangles guarantees that there will be many attempts to do so.
Indeed, let us divide the rectangle $R\big([0, 2L_{k+1})\times[ 0, H_{k+1})\big)$ into bands of height $2H_k$, and verify whether crossings take place inside these bands (see Figure \ref{f3}).
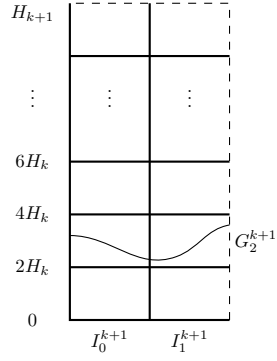
\begin{figure}[htb!]
\centering
\begin{tikzpicture}[scale=0.7, every node/.style={scale=0.7}]
\draw[thick](3,2)--(0,2) -- (0,8);
\draw[dashed](3,2) -- (3,8) -- (0,8);
\draw[thick](1.5,2) -- (1.5,8);
\draw[thick] (0,3) to (3,3);
\draw[thick](0,4) to (3,4);
\draw[thick] (0,5) to (3,5);
\draw[thick](0,7) to (3,7);
\node at (0.75,1.6) {$I_0^{k+1}$};
\node at (2.25,1.6) {$I_1^{k+1}$};
\node at (-0.7,2) {$0$};
\node at (-0.7,3) {$2H_k$};
\node at (-0.7,4) {$4H_k$};
\node at (-0.7,5) {$6H_k$};
\node at (-0.7,6.3) {$\vdots$};
\node at (-0.7,7.8) {$H_{k+1}$};
\node at (3.5,3.5) {$G^{k+1}_2$};
\node at (0.75,6.3) {$\vdots$};
\node at (2.25,6.3) {$\vdots$};
\draw (0,3.6) to [out=0,in=-160] (2,3.2) to [out=20,in=190] (3,3.8);
\end{tikzpicture}
\caption{\label{f3} The occurrence of $G^{k+1}_2$, implies the occurrence of $\{C_{0,0}^{k+1}\}$.}
\end{figure}

For $0\leq j\leq\lceil \exp(L_{k+1}^\mu)\rceil-1$, define the events
\[
G^{k+1}_j:=\mathcal{C}_h\Big(R\big([0, 2L_{k+1})\times[ 2j H_k, (2j+2)H_k)\big)\Big)
\]
and notice that if $C_{0,0}^{k+1}$ does not occur, then none of the events $G^{k+1}_{j}$ can occur. 
Therefore,
\begin{eqnarray}
\label{C00}
\mathbb{P}^\Lambda_p\big((C_{0,0}^{k+1})^c\big)\leq\big(1-\mathbb{P}^\Lambda_p(G^{k+1}_{0})\big)^{\exp(L_{k+1}^\mu)},
\end{eqnarray} 
where we have used the independence and the shift-invariance of the events $G^{k+1}_j$.

In order to obtain a lower bound on the probability of $G_0$ we will build horizontal crossings in $R\big([0, 2L_{k+1})\times[0, 2 H_k)\big)$ using the events $C^k_{i,0}$ and $D^k_{i,0}$ supported in rectangles that project onto good blocks at scale $k$.
However, in case rectangles that project onto bad block of type $I^k_i$ appear, we will try to cross them straight at their bottom.
The strategy is illustrated in Figure~\ref{f4}.

Recall the definition of $l_{{k+1},l}$ in \eqref{firstandlast} and 
denote $j_0$ (resp.\ $j_1$) the earliest index $i \in l_{{k+1},0}$ (resp.\ $i \in l_{{k+1},1}$) such that $I^k_{j_0}$ (resp.\ $I^k_{j_1}$) is a bad block at scale $k$.
In case $I^{k+1}_0$ (resp.\ $I^{k+1}_1$) does not contain any bad block at scale $k$, we define $j_0=0$ (resp. $j_1 = \lfloor L^{\gamma -1}_k\rfloor$) that is, we force $j_0$ (resp. $j_1)$ to be smallest index in $l_{{k+1},0}$ (resp.\ $l_{{k+1},1}$).
Now, for, $l = 0,1$, define
\[
I_l^*:=\big(I^k_{j_l-1}\cup I^k_{j_l}\cup I^k_{j_l+1}\cup I^{k}_{j_l+2}\big)\cap \big(I^{k+1}_0\cup I^{k+1}_1\big).
\]
The intervals $I_0^*$ and $I^*_1$ are just enlarged versions of $I^k_{j_0}$ and $I^k_{j_1}$ that contain the bad blocks at scale $k$ that lie inside $I^{k+1}_0$ and $I^{k+1}_1$, and also the good blocks at scale $k$ that are adjacent to these bad blocks to the left and to the right (as long as these are still contained in $I^{k+1}_0\cup I^{k+1}_1$).
See Figure \ref{f4}.

\begin{figure}[htb!]
	\centering
	\begin{tikzpicture}[scale=0.8, every node/.style={scale=0.8}]
	\draw[thick](7,0)--(0,0)--(0,2);
	\draw[dashed](7,0)--(7,2)--(0,2);
	\draw[thick](7,2)--(7,0)--(14,0);
	\draw[dashed](7,2)--(14,2)--(14,0);
	\foreach \x in {0,1,2,3,4,5,7,8,9,10,11,12}\draw[dotted] (1+\x,0) to (1+\x,2);
	\draw[dotted] (0,1) to (14,1);
	\draw[thick] (4,-1) to (8,-1);
	\draw[thick] (11,-1) to (14,-1);
	\draw[thick] (0,-2) to (14,-2);
	\draw[thick] (0,-1.9) to (0,-2.1) ;
	\draw[thick] (14,-1.9) to (14,-2.1) ;
	\draw[thick] (7,-1.9) to (7,-2.1) ;
	\draw[thick] (11,-0.9) to (11,-1.1) ;
	\draw[thick] (14,-0.9) to (14,-1.1) ;
	\draw[thick] (4,-0.9) to (4,-1.1) ;
	\draw[thick] (8,-0.9) to (8,-1.1) ;
	\node at (-0.5,0) {$0$};
	\node at (-0.5,1) {$H_k$};
	\node at (-0.5,2) {$2H_k$};
	\node at (0.5,-0.5) {$I_0^{k}$};
	\node at (1.5,-0.5) {$I_1^{k}$};
	\node at (3.5,-0.5) {$\cdots$};
	\node at (9,-0.5) {$\cdots$};
	\node at (5.5,-0.5) {$I_{j_0}^{k}$};
	\node at (12.5,-0.5) {$I_{j_1}^{k}$};
	\node at (6,-1.4) {$I_{0}^{*}$};
	\node at (12.5,-1.4) {$I_{1}^{*}$};
	\node at (3.5,-2.4) {$I_{0}^{k+1}$};
	\node at (10.5,-2.4) {$I_{1}^{k+1}$};
	\foreach \x in {0,...,9} \draw (5+0.1*\x,0) -- (5.13+0.1*\x,-0.13);
	\foreach \x in {0,...,19} \draw (12+0.1*\x,0) -- (12.13+0.1*\x,-0.13);
	\foreach \x in {0,1,2,3,4,6,7,8,9,10,11} \draw (0.7+\x,2) to [out=-80,in=20] (0.5+\x,1) to [out=-160,in=100] (0.3+\x,0);
	\foreach \x in {0,1,2,3,6,7,8,9,10} \draw (0+\x,0.7) to [out=10,in=110] (1+\x,0.5) to [out=-70,in=190] (2+\x,0.3);
	\draw (4,0.1)--(8,0.1);
	\draw (11,0.1)--(14,0.1);
	\end{tikzpicture}
	\caption{\label{f4}  
		In this picture, $I^k_{j_0}$, $I^k_{j_1}$ and $I^k_{j_1+1}$ are the only bad blocks at scale $k$ inside $I^{k+1}_0 \cup I^{k+1}_1$. 
		We use the crossings provided by the events $C^k_{i,0}$ and $D^k_{i,0}$ to traverse rectangles that project onto good blocks at scale $k$. 
		Rectangles that project onto bad blocks at scale $k$ and their neighbors are traversed straight at their bottom, yielding the occurrence of $B_0$ and $B_1$. 
		All together, the occurrence of the $C^k_{i,0}$, $D^k_{0,1}$, $B_0$ and $B_1$ implies the occurrence of $G^{k+1}_0$. }
\end{figure}
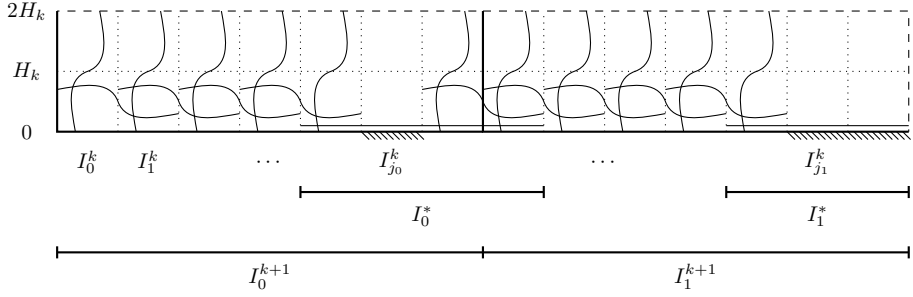

In order to bound below the probability of crossing the possible bad blocks at scale $k$, we will introduce the events $B^k_l$, for $l=0,1$, as follows
\[
B^k_l:=\big\{\textrm{all edges of the form } \{(m,0),(m+1,0)\}\textrm{ with }m\in I_l^* \textrm{ are open}\big\}.
\]
Since we are assuming $p>1/2>1/e$, and $I^*_0$ and $I^*_1$ have length at most $4L_k$, we have 
\begin{eqnarray}\label{B_01}\mathbb{P}^\Lambda_p(B^k_0\cap B^k_1)\geq p^{8 L_k}\geq e^{-8L_k}.
\end{eqnarray}

Notice that
\begin{eqnarray}
\label{G_0} \bigg(\bigcap_{\substack{i; \  I_i^k, I_{i+1}^k\\ \textrm{are good}}}C_{i,0}^k\bigg)\cap\bigg(\bigcap_{\substack{j;\  I_j^k \\ \textrm{is good}}}D_{j,0}^k\bigg)\cap B^k_0\cap B^k_1\subseteq G^{k+1}_0
\end{eqnarray}
where the intersections run over the indices $0\leq i, j\leq 2\lfloor L_k^{\gamma-1}\rfloor-1$. 
Since the events $B^k_l$, $C^k_{i,0}$ and $D^k_{i,0}$ are increasing, it follows from the FKG inequality and \eqref{eq:def_qk} \eqref{eq:qk_inv}, \eqref{B_01}, \eqref{G_0} that
\begin{eqnarray}
\mathbb{P}^\Lambda_p\big(G^{k+1}_0\big)&\geq& \big(1-q_k(p)\big)^{4\lfloor L_k^{\gamma-1}\rfloor}e^{-8L_k}
\geq \big(1-4L_k^{\gamma-1}q_k(p)\big)e^{-8L_k}.
\label{PG_0}
\end{eqnarray}

Pick $c_6=c_6(L_0, \gamma, \beta)$ sufficiently large such that  $4L_k^{\gamma-1}e^{-L_k^\beta}\leq 1/e$ for any $k\geq c_6$.
Now if $q_k(p) \leq \textrm{exp} (-L_k^\beta)$, then \eqref{PG_0} implies
\begin{eqnarray}
\mathbb{P}^\Lambda_p\big(G^{k+1}_0\big)&\geq& \big(1-4L_k^{\gamma-1}e^{-L_k^\beta}\big)e^{-8L_k}
\geq {e^{-8L_k-1}} \geq {e^{-9L_k}}
\label{PG_02}
\end{eqnarray}
for every $k\geq c_6$.
Plugging \eqref{PG_02} into \eqref{C00} and dividing by $\exp(-L_{k+1}^\beta)$ we get
\begin{eqnarray}
\frac{\mathbb{P}^\Lambda_p\big((C_{0,0}^{k+1})^c\big)}{\exp(-L_{k+1}^\beta)}&\leq&\exp(L_{k+1}^\beta)\Big(1-{e^{-9L_k}}\Big)^{\exp(L_{k+1}^\mu)}\nonumber\\
&\leq&\exp(L_{k+1}^\beta) \exp\Big(\exp(-9 L_k+L_{k+1}^\mu)\Big)\nonumber\\
&\stackrel{(\ref{cotaL})}{\leq}&\exp(L_k^{\gamma\beta}-{\exp\big(-9 L_k+(\tfrac{2}{3})^\mu L_k^{\gamma\mu}\big)}),
\label{e:contract_ck00}
\end{eqnarray}
for every $k\geq c_6$.

By (\ref{mu}), $\gamma\mu>1$ therefore, we can take $c_4=c_4(\gamma, L_0, \mu, \beta, c_6)\geq c_6$ sufficiently large such that for every $k\geq c_4$ the right-hand side in \eqref{e:contract_ck00} is at most $1$. 
This finishes the proof.
\end{proof}

\begin{proof}[Proof of  Lemma \ref{l5}] 
Fix an environment $\Lambda \in \big\{\text{$I_0^{k+1}$ is good}\big\}$.  
We will estimate $\mathbb{P}^\Lambda_p\big(D_{0,0}^{k+1}\big)$ using a Peierls-type argument in a rescaled lattice.
Each rectangle $I^k_i\times\big[j H_k, (j+1)H_k\big)$ will correspond to a vertex $(i,j)$ in this rescaled lattice.
Such vertex $(i, j)$ is said open if the event $C_{i,j}^k\cap D_{i,j}^k$ occurs in the original lattice, see Figure \ref{f5}.
Therefore, the rescaled lattice is just $\mathbb{Z}_+^2$ and the resulting process is a dependent percolation on it.

Since $I_0^{k+1}$ is good, either $I^k_i$ is good for every $i\in\big\{0,1, \ldots, \big\lfloor \tfrac{1}{2}{\lfloor L_k^{\gamma-1}\rfloor}\big\rfloor-1\big\}$ or $I^k_i$ is good for every $i\in\big\{\big\lfloor \tfrac{1}{2}{\lfloor L_k^{\gamma-1}\rfloor}\big\rfloor+1, \ldots, \lfloor L_k^{\gamma-1}\rfloor-1\big\}$. 
Assume without loss of generality that the former holds and define
\begin{eqnarray}
\ell_k:=\big\lfloor \tfrac{1}{2}{\lfloor L_k^{\gamma-1}\rfloor}\big\rfloor-1.\label{Ldaprovcruz}
\end{eqnarray} 
Consider the rectangle
\[
R=R\Big(\big[0, \ell_k\big)\times\big[0, 4 \lceil \exp(L_{k+1}^\mu)\rceil\big)\Big),
\] 
and the event $\mathcal{C}_v(R)$ that this rectangle is crossed vertically, as defined in \eqref{crossingh}.

If $\mathcal{C}_v(R)$ does not occur, then there is a sequence of distinct vertices $(i_0,j_0), (i_1,j_1), \ldots, (i_n,j_n)$ in $R$ satisfying 
\begin{enumerate}
\item $\max\big\{\abs{i_l-i_{l-1}},\,\abs{j_l-j_{l-1}}\big\}=1$,
\item $(i_0,j_0)\in\{0\}\times \big[0,4 \lceil \exp(L_{k+1}^\mu)\rceil\big]$ and $(i_n,j_n)\in\{\ell_k\}\times \big[0,4 \lceil \exp(L_{k+1}^\mu)\rceil\big]$,
\item $(i_k,j_k)$ is closed for every $k=0,\ldots, n$.
\end{enumerate}

Note that there are at most $4 \lceil \exp(L_{k+1}^\mu)\rceil8^n$ sequences with $n+1$ vertices that satisfy $1.$ and $2$.
Also, the probability that a fixed vertex in $R$ be open is at least 
\[
1-2q_k(p)\geq1-2\exp(-L_k^\beta).
\] 
The geometry of the crossing events in the original lattice implies that for any $(i,j)\in\mathbb{Z}^2_+$, the event $\{(i,j) \textrm{ is open}\}$ in the renormalized lattice depends on $\{(i',j')\textrm{ is open}\}$ for, at most $7$ distinct vertices $(i',j')$ (see Figure \ref{f5}). 
Therefore, for every set containing $n+1$ vertices, there are at least $\lfloor{n/7}\rfloor$ vertices whose states are mutually independent.

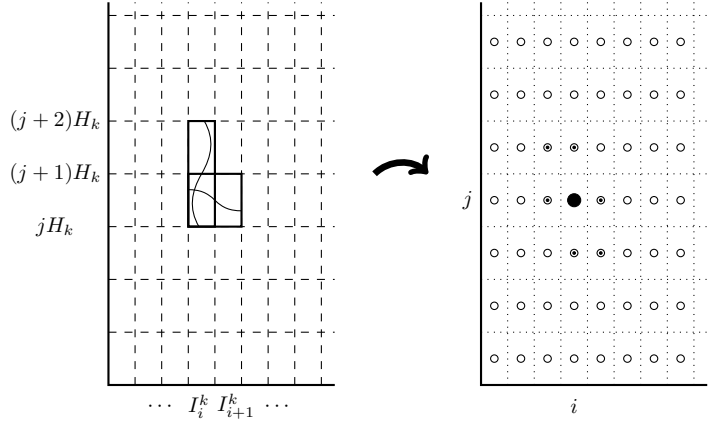
\begin{figure}[htb!]
\centering
\begin{tikzpicture}[scale=0.7, every node/.style={scale=0.8}]
\draw[thick](1,10.25)--(1,3)--(5.25,3);
\foreach \x in {1.5,2,2.5,3,3.5,4,4.5,5}\draw[dashed] (\x,3) to (\x,10.25);
\foreach \x in {3,4,5,6,7,8,9,10}\draw[dashed] (1,\x) to (5.25,\x);
\draw[thick](2.5,6) rectangle (3.5,7);
\draw[thick](2.5,6) rectangle (3,8);
\draw (0.3+2.5,8) to [out=-60,in=60] (0.25+2.5,7) to [out=-120,in=120] (0.2+2.5,6);
\draw (2.5,6+0.7) to [out=0,in=135] (3,6+0.5) to [out=-45,in=180] (3.5,6+0.3);
\node at (2.7,-0.4+3) {$I_i^k$};
\node at (3.4,-0.4+3) {$I_{i+1}^k$};
\node at (2,-0.4+3) {$\cdots$};
\node at (4.2,-0.4+3) {$\cdots$};
\node at (0,6) {$j H_k$};
\node at (0,7) {$(j+1) H_k$};
\node at (0,8) {$(j+2) H_k$};
\draw[->, line width=2.5pt ] (6,7) to[out=45,in=135] (7,7);
\draw[thick] (8,10.25)--(8,3)--(12.25,3);
\foreach \x in {2,2.5,3,3.5,4,4.5,5,5.5}\draw[dotted] (6.5+\x,3) to (6.5+\x,10.25);
\foreach \x in {4,5,6,7,8,9,10}\draw[dotted] (8,\x) to (12.25,\x);
\foreach \x in {1.5,2,2.5,3,3.5,4,4.5,5} \foreach \y in {3,4,5,6,7,8,9} \draw (6.75+\x,0.5+\y) circle (2pt);
\filldraw (2.75+7,5.5) circle (.7pt);
\filldraw (3.25+7,5.5) circle (.7pt);
\filldraw (2.25+7,6.5) circle (.7pt);
\filldraw (2.75+7,6.5) circle (3.5pt);
\filldraw (3.25+7,6.5) circle (.7pt);
\filldraw (2.25+7,7.5) circle (.7pt);
\filldraw (2.75+7,7.5) circle (.7pt);
\node at (9.75,+2.6) {$i$};
\node at (7.75,6.5) {$j$};
\end{tikzpicture}
\caption{\label{f5} On the left, we illustrate the occurrence of the event $C_{i,j}^k\cap D_{i,j}^k$ on the original lattice. 
On the right, we depict the renormalized square lattice where the circles represent the sites. 
The occurrence of $C_{i,j}^k\cap D_{i,j}^k$ in the original lattice implies that the site $(i,j)$ (represented as a black circle) is open in the renormalized lattice. 
The state of the site $(i,j)$ only depends on the state of the other six sites represented as  circles with a dot inside.}
\end{figure}

Therefore,
\begin{eqnarray}
\mathbb{P}
\big(\mathcal{C}_v(R)^c\big)&\leq&\hspace{-0.3cm}\sum_n \mathbb{P}\big(\textrm{there is a sequence of $n+1$ vertices satisfying $1.$, $2.$ and $3.$}\big)\nonumber \\
&\leq&\hspace{-0.3cm}\sum_{n\geq \ell_k}4 \lceil \exp(L_{k+1}^\mu)\rceil8^n\Big(2\exp(-L_k^\beta)\Big)^{\lfloor{n/7}\rfloor}\nonumber\\
&\leq&\hspace{-0.3cm}4 \lceil \exp(L_{k+1}^\mu)\rceil \sum_{n\geq \ell_k}\exp(n\ln 8 + \lfloor{n/7}\rfloor\ln 2 -\lfloor{n/7}\rfloor L_k^\beta)\nonumber\\
&\stackrel{(\ref{Ldaprovcruz})}{\leq}&\hspace{-0.3cm}  c_7\exp(L_{k+1}^\mu-c_8\cdot L_k^{\beta+\gamma-1})\nonumber,
\end{eqnarray}
for some $c_7=c_7(\gamma, L_0, \beta)>0$ and $c_8=c_8(\gamma, L_0, \beta)>0$ sufficiently large.
\medskip

Since $\mathbb{P}^\Lambda_p\big(D_{0,0}^{k+1}\big) \geq \mathbb{P}\big(\mathcal{C}_v(R)\big)$, we have:
\begin{eqnarray}
\frac{\mathbb{P}^\Lambda_p\big((D_{0,0}^{k+1})^c\big)}{\exp(-L_{k+1}^\beta)}\leq c_7\exp(L_{k}^{\gamma\mu}+L_k^{\gamma\beta}-c_8 L_k^{\beta+\gamma-1})\label{lcrosslast}\end{eqnarray}
It follows from the choice of $\beta$ in (\ref{beta}), that $$\beta+\gamma-1>\max\{\gamma\beta,\gamma\mu\}.$$ The proof now follows by choosing $c_5=c_5(\gamma, L_0, \mu, \beta)$ sufficiently large so that the right-hand side of the \eqref{lcrosslast} is less than $1$ whenever $k \geq c_5$.
\end{proof}

\section{Proof of Theorems \ref{t3_1} and \ref{t3_2}}

In this section we put together the results obtained in Section \ref{s:multiscale} in order to prove Theorem \ref{t3_1}.
We also present the proof of Theorem \ref{t3_2}.

\subsection{Proof of Theorem \ref{t3_1}}
\label{demt3_1}

We split the proof of Theorem \ref{t3_1} into two parts.
In the first one, we show that for a given large $p$, $\{o \leftrightarrow \infty\}$ occurs with positive probability for almost all realizations of $\Lambda$.
In the second part, we show that for small enough $p$ this event occurs with null probability.
By standard coupling, $\mathbb{P}^{\Lambda}_p$ is stochastically increasing in $p$ therefore, our results show the existence of a non-trivial critical threshold $p_c(\Lambda)$, for almost all $\Lambda$.
Nevertheless by standard ergodicity arguments, we have that $p_c$ does not depend on $\Lambda$.

\begin{proof}[Proof of Theorem \ref{t3_1} (Percolation for large $p$)] 

Let $\xi$ be any positive random variable such that $\mathbb{E}(\xi^\eta)<\infty$ for a given $\eta>1$.
Denote 
\[
m:=\gcd\big\{k\in\mathbb{Z}^*_+; \ \mathbb{P}(\lceil\xi\rceil=k)\not=0\big\}.
\]
Now define the positive integer-valued, aperiodic random variable $\xi'=\lceil \xi\rceil/m$ which also satifies $\mathbb{E}\big((\xi')^\eta\big)<\infty$. 
If for some $p<1$, we have $\mathbb{P}^{\Lambda'}_p(o\leftrightarrow\infty)>0$ for $\upsilon_{\xi'}$-a.e.\ environment $\Lambda'$, then also $\mathbb{P}^{\Lambda}_{p^{1/m}}(o\leftrightarrow \infty)>0$ for $\upsilon_{\xi}$-a.e.\ $\Lambda$ as it can be seem by a simple coupling argument.
In view of this, we will assume henceforth that $\xi$ is a positive integer-valued, aperiodic random variable.
In particular, we can apply Lemmas \ref{lmult1} and \ref{l3} from Section \ref{s:multiscale}.

Lemma \ref{lmult1} implies that
\[
\upsilon^\rho_\xi\bigg(\hspace{-1cm}\bigcup_{\hspace{1cm}0\leq i\leq\lfloor L_k^{\gamma-1}\rfloor-1}\hspace{-1 cm}\big\{I_i^k\textrm{ is bad}\big\}\bigg)\leq L_k^{\gamma-1}L_k^{-\alpha}\overset{(\ref{defalp})}{\leq} L_k^{-\frac{\alpha}{2}}.
\]
By \eqref{cotaL}, $\sum_{k}L_k^{-\alpha/2}<\infty$, therefore, it follows from  Borel-Cantelli's Lemma, that for $\upsilon^\rho_\xi$-almost every environment $\Lambda$, there exists $c_9=c_9(\Lambda, c_3)>c_3$ such that for every $k\geq c_9$, all the blocks at scale $k$ inside $I^{k+1}_0$ are good. 
Fix such an environment $\Lambda$.

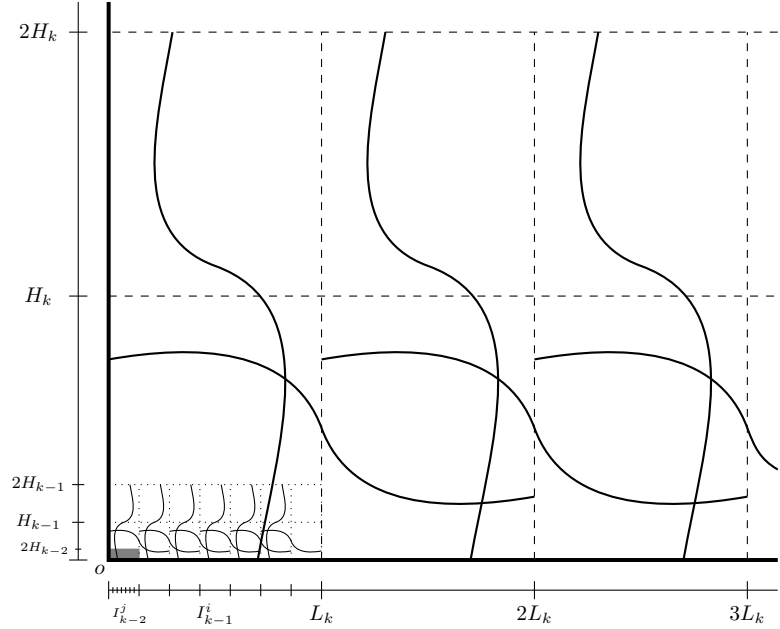
\begin{figure}[htb!]
\centering
\begin{tikzpicture}[scale=0.4, every node/.style={scale=0.8}]
\filldraw[black!50!] (0,0)rectangle(1,0.36);
\draw[line width=1.5pt](22,0)--(0,0)--(0,18.5);
\draw[dashed](22,17.5)--(0,17.5);
\draw[dashed] (7,0)--(7,17.5);
\draw[dashed] (21,0)--(21,17.5);
\draw[dashed] (14,0)--(14,17.5);
\draw[dashed] (0,8.75)--(22,8.75);
\draw[dotted] (0,2.5)--(7,2.5);
\draw[dotted] (0,1.25)--(7,1.25);
\foreach \x in {0,1,...,6}\draw[dotted] (1+\x,0) to (1+\x,2.5);
\foreach \x in {0,1,2,3,4,5} \draw (0.7+\x,2.5) to [out=-80,in=20] (0.5+\x,1.25) to [out=-160,in=100] (0.3+\x,0);
\foreach \x in {0,1,2,3,4,5} \draw (0+\x,0.95) to [out=10,in=110] (1+\x,0.625) to [out=-70,in=190] (2+\x,0.3);
\draw[thick]  (2.1,17.5)to [out=-180+80,in=-180-20] (3.5,9.75) to [out=-180+160,in=-180-100] (4.9,0);
\draw[thick]  (2.1+7,17.5)to [out=-180+80,in=-180-20] (3.5+7,9.75) to [out=-180+160,in=-180-100] (4.9+7,0);
\draw[thick]  (2.1+14,17.5)to [out=-180+80,in=-180-20] (3.5+14,9.75) to [out=-180+160,in=-180-100] (4.9+14,0);
\draw[thick]  (0,6.65) to [out=10,in=110] (7,4.375) to [out=-70,in=190] (14,2.1);
\draw[thick]  (7,6.65) to [out=10,in=110] (14,4.375) to [out=-70,in=190] (21,2.1);
\draw[thick]  (14,6.65) to [out=10,in=110] (21,4.375)to [out=-70,in=150] (22,3);
\draw (0,-1)--(22,-1);
\foreach \x in{0,1,2,3}\draw (\x*7,-1.3)--(\x*7,-0.7);
\foreach \x in{1,2,3,4,5,6}\draw (\x,-1.2)--(\x,-0.8);
\foreach \x in{1,2,3,4,5,6}\draw (\x*0.14,-1.1)--(\x*0.14,-0.9);
\foreach \x/\y in{1/,2/2,3/3}\node at (\x*7,-1.8){$\y L_k$};
\node at (3.5,-1.8){\footnotesize $I_{k-1}^i$};
\node at (0.7,-1.8){\tiny  $I_{k-2}^j$};
\draw (-1,0)--(-1,18.5);
\draw (-1.3,0)--(-0.7,0);
\draw (-1.3,8.75)--(-0.7,8.75);
\draw (-1.3,17.5)--(-0.7,17.5);
\draw (-1.2,2.5)--(-0.8,2.5);
\draw (-1.2,1.25)--(-0.8,1.25);
\draw (-1.1,0.36)--(-0.9,0.36);
\node at (-2.3,8.75){$H_k$};
\node at (-2.3,17.5){$2H_k$};
\node at (-2.3,1.25){\footnotesize$H_{k-1}$};
\node at (-2.3,2.5){\footnotesize$2H_{k-1}$};
\node at (-2.1,0.36){\tiny $2H_{k-2}$};
\node at (-0.3,-0.3){$o$};
\end{tikzpicture}
\caption{The simultaneous occurrence of the events $C^k_{i,0}$ e $D^k_{i,0}$ for $k \geq c_{10}$ and $i=0,\cdots, \lfloor L_k^{\gamma-1} \rfloor -2$, implies the existence of an infinite cluster.}
\label{ultfigu}
\end{figure}

Let $c_{10} =c_{10}(\Lambda) \geq c_9$ be any integer such that
\begin{equation}
\label{e:bound_sum_l_k}
\sum_{k\geq c_{10}}2 L_k^{\gamma-1} \exp(-L_k^\beta) < 1/2.
\end{equation}

Recall the definition of $C_{i,0}^k$ and $D_{i,0}^k$ in (\ref{defC}) and (\ref{defD}) and note that 
\begin{equation}
\label{e:several_crossings}
\bigcap_{k\geq c_{10}}\bigg(\bigcap_{i=0}^{\lfloor L_k^{\gamma-1}\rfloor-2}\hspace{-0.3cm}\big(C_{i,0}^k\cap D_{i,0}^k\big)\bigg)\subseteq \big\{\textrm{there is an infinite cluster}\big\},
\end{equation}
(see Figure \ref{ultfigu}). 
By Lemma \ref{l3}, we can take $p$ sufficiently close to 1, without depending on $\Lambda$, so that $q_k(p) \leq \exp(-L_k^\beta)$ for every $k\geq c_{10}$.
Using the FKG inequality, \eqref{eq:qk_inv} and \eqref{e:bound_sum_l_k} we have
\begin{eqnarray}
\mathbb{P}^\Lambda_p\Bigg(\bigcap_{k\geq c_{10}}\hspace{-0.2cm} \bigg(\bigcap_{i=0}^{\lfloor L_k^{\gamma-1}\rfloor-2}\hspace{-0.3cm} \big(C_{i,0}^k\cap D_{i,0}^k\big)\bigg) \Bigg)&\geq&\prod_{k\geq c_{10}}(1-2q_k(p))^{\lfloor L_k^{\gamma-1}\rfloor-1}\nonumber\\
&\geq&1-\sum_{k\geq c_{10}}2 L_k^{\gamma-1} q_k(p) \nonumber\\
&\geq&1-\sum_{k\geq c_{10}}2 L_k^{\gamma-1} \exp(-L_k^\beta) \geq \frac{1}{2}.
\label{fafafa98}
\end{eqnarray}
From \eqref{e:several_crossings} we get
\[
\mathbb{P}^\Lambda_p\big(\text{there is an infinite cluster}\big) \geq \frac{1}{2},
\]
hence the FKG inequality guarantees that $\mathbb{P}^{\Lambda}_{p}(o \leftrightarrow \infty) >0$.
\end{proof}

Before we present the second part of the proof, let us provide a rough outline of our argument.
First, for a fixed an environment $\Lambda$, we will consider a new lattice, obtained by joining the columns that are close to one another and also by truncating at a maximum distance between the other columns. 
This is done in such a way that, for this new lattice the probability of $\{o\leftrightarrow \infty\}$ is greater than in the original one. 
To bound this probability, we will use a duality argument, noting that the dual model is distributed as the original model which will allow us to apply the bounds obtained in Lemmas \ref{lmult1} and \ref{l3}.

\begin{proof}[Proof of Theorem \ref{t3_1} (Absence of percolation for small $p$)]
Let us fix $0<\kappa\leq 1$ such that $\mathbb{P}(\xi\geq\kappa)\geq 1/2$, which exists because $\xi$ is a positive random variable. 
For a fixed realization of $\Lambda$, or equivalently of $(\xi_k)_{k\in\mathbb{Z}_+^*}$ let us define $J_0=0$ and for every $k\geq 1$,
\[
J_k:=\min\Big\{n;\, \sum_{i=0}^{n} \mathbf{1}_{\{\xi_i \geq \kappa\}} \geq k\Big\}.
\]
We now define another sequence $(\zeta_k)_{k\in\mathbb{Z}_+}$ by 
\[
\zeta_0=0\,\,\,\,\textrm{ and }\,\,\,\, \zeta_k = J_k - J_{k-1}\,\,\,\, \text{ for $k\geq 1$}.
\]
Roughly speaking, $\zeta_k-1$ counts the number of elements in the sequence $\{\xi_j\}_{j}$ that appear between the $(k-1)$-th and the $k$-th appearance of a $\xi_j$ whose value exceeds $\kappa$.
By our choice of $\kappa$, the random variables $\{\zeta_i\}_{i\in\mathbb{Z}_+^*}$ are dominated by independent random variables with geometric distribution with parameter $1/2$ hence, in particular
\begin{equation}
\label{e:moment_zeta}
    \mathbb{E}(\zeta_i^2) < \infty.
\end{equation}

Recall that, for a fixed $\Lambda$, $\mathbb{P}_p^{\Lambda}$ stands for the law of a bond percolation process in $\mathbb{Z}^2_+$ as given by \eqref{ipmsl}.
We now perform three operations that will only increase the probability of existence of an infinite cluster:
\begin{enumerate}
    \item replace $\xi_i$ by $\kappa$ whenever $\xi_i \geq \kappa$;
    \item replace the parameter $p$ in vertical edges by $p^\kappa$;
    \item contract each horizontal edge $\{(i,j),(i+1,j)\}$ for which $\xi_i < \kappa$ into a single site.
\end{enumerate}
These operations lead to a model in $\mathbb{Z}^2_+$ in which each edge is independently declared open with probability
\begin{equation}
\label{e:weigths_enhanced}
p_e=\left\{\begin{array}{ccc}
1-(1-p^\kappa)^{\zeta_i},& \textrm{ if }e=\{(i,j),(i,j+1)\}\\
p^\kappa,& \textrm{ if }e=\{(i,j),(i+1,j)\}
\end{array}\right.,
\end{equation}
as illustrated in Figure \ref{f:weigths_enhanced}.
The proof is finished once we show that, for this model, the probability that $\{o\leftrightarrow \infty\}$ equals $0$.

\begin{figure}[htb!]
	\centering
	\begin{tikzpicture}[]scale=0.4, every node/.style={scale=0.4}
\foreach \x in {0,1,1.3,1.5,2.7,2.9,3.7,3.8,4,5} \draw[] (\x,0)--(\x,3.6);
\foreach \y in {0,0.8,...,4} \draw[] (0,\y)--(5.9,\y);
\foreach \y in {0,0.8,...,4} \draw[] (7.5,\y)--(7.5+3.3,\y);
\foreach \x in {0,0.7,...,3.3} \draw[] (7.5+\x,0)--(7.5+\x,3.6);

\foreach \y in {0,0.8,...,3.2}\draw (8.2,\y) to [out=120, in=240] (8.2,\y+0.8);
\draw[] (8.2,3.2) to [out=120, in=-90] (8.1,3.6);
\foreach \y in {0,0.8,...,3.2} \draw (8.2,\y) to [out=60, in=-60] (8.2,\y+0.8);
\draw[] (8.2,3.2) to [out=60, in=-90] (8.3,3.6);

\foreach \y in {0,0.8,...,3.2} \draw (8.2+0.7,\y) to [out=60, in=-60] (8.2+0.7,\y+0.8);
\draw[] (8.2+0.7,3.2) to [out=60, in=-90] (8.3+0.7,3.6);

\foreach \y in {0,0.8,...,3.2} \draw (8.2+1.4,\y) to [out=120, in=240] (8.2+1.4,\y+0.8);
\draw[] (8.2+1.4,3.2) to [out=120, in=-90] (8.1+1.4,3.6);
\foreach \y in {0,0.8,...,3.2} \draw (8.2+1.4,\y) to [out=60, in=-60] (8.2+1.4,\y+0.8);
\draw[] (8.2+1.4,3.2) to [out=60, in=-90] (8.3+1.4,3.6);
\node at (7.1,-0.3) {$\zeta:$};
\node at (7.5,-0.3) {$1$};
\node at (8.2,-0.3) {$3$};
\node at (8.9,-0.3) {$2$};
\node at (9.6,-0.3) {$3$};
\node at (10.3,-0.3) {$1$};

\node at (0.5,-0.5) {$\xi_{1}$};
\node at (2.1,-0.5) {$\xi_{4}$};
\node at (3.3,-0.5) {$\xi_{6}$};
\node at (4.5,-0.5) {$\xi_{9}$};

	\def\numbers{{0,1,1.3,1.5,2.7,2.9,3.7,3.8,4,5}}
	   \foreach \i in {0,...,8}{
	    \pgfmathsetmacro{\n}{\numbers[\i]}
	     \pgfmathsetmacro{\m}{\numbers[\i+1]}
	      \pgfmathsetmacro{\k}{\i+1}
	     \draw[] (\n,-.2) -- (\m,-.2);
	     \draw[](\n,-.15) -- (\n,-.25);
	     \draw[](5,-.15) -- (5,-.25);
	     	     }
	\end{tikzpicture}
	\caption{On the left-hand side we represent $\mathcal{L}_{\Lambda}$ with the corresponding variables $\xi_i$ whose values are larger than $\kappa$, the ones whose values are smaller then $\kappa$ have been omitted. 
	On the right-hand site we represent the lattice obtained from the $\mathbb{Z}^2_+$ by contracting the edges whose variable $\xi_i$ are smaller than $\kappa$ into a single site and the corresponding varibales $\zeta_i$. 
	Homogeneous percolation on this lattice with parameter $p^{\kappa}$ is equivalent to percolation on $\mathbb{Z}^2_+$ with parameters given by \eqref{e:weigths_enhanced}.}
	\label{f:weigths_enhanced}.
\end{figure}
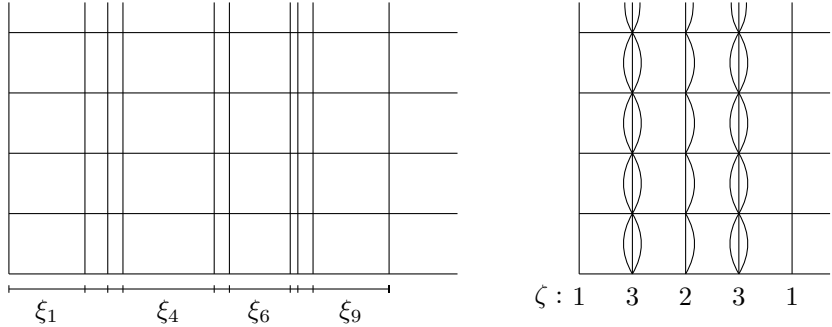

We use a standard duality argument.
Let us now consider the lattice $(\mathbb{Z}^2_+)^\star:=\mathbb{Z}^2_+ - (1/2,1/2)$, which is just a copy of the original $\mathbb{Z}^2_+$ lattice shifted $1/2$ units left and down.
Strictly speaking it is not exactly the dual of $\mathbb{Z}^2_+$ because the edges along its leftmost vertical semiaxis $\{-1/2\}\times (\mathbb{Z}_+ - 1/2)$ and lowermost horizontal semiaxis  $(\mathbb{Z}_+-1/2) \times \{-1/2\}$ do not intercept any edge of $\mathbb{Z}^2_+$.

For each edge $e^\star$ in $(\mathbb{Z}^2_+)^\star$ that actually intersects an edge in $e \in \mathbb{Z}^2_+$ we declare it open (resp.\ closed) if $e$ is closed (resp.\ open).
We still need to specify the state of the edges that lie on the leftmost and lowermost semiaxis of $(\mathbb{Z}^2_+)^\star$.
This is done independently as folows:
Declare the edges in vertical semiaxis $\{-1/2\}\times (\mathbb{Z}_+ - 1/2)$ open with probability $1-p^{\kappa}$.
Also, declare the edges of the type $\{(i-1/2,-1/2),(i+1/2,-1/2)\}$ that lie in the horizontal axis $(\mathbb{Z}_+-1/2) \times \{-1/2\}$ open with probability $(1-p^{\kappa})^{\zeta_i}$. 
It is straightforward to check that the egdes $e^\star$ are open independently with probability
\begin{equation}
\label{e:ipmsl_dual}
p_{e^\star}=
\left\{\begin{array}{ccc}
1-p^\kappa,& \textrm{ if }e^\star=\big\{(i-\tfrac{1}{2},j-\tfrac{1}{2}),(i-\tfrac{1}{2},j+\tfrac{1}{2})\big\}\vspace{0.1cm}\\
(1-p^\kappa)^{\zeta_i},& \textrm{ if }e^\star=\big\{(i-\tfrac{1}{2},j-\tfrac{1}{2}),(i+\tfrac{1}{2},j-\tfrac{1}{2})\big\}
\end{array}\right..
\end{equation}
Comparing with \eqref{ipmsl}, the resulting law in $(\mathbb{Z}^2_+)^\star$ of a process given by \eqref{e:ipmsl_dual} is simply (with an excusable abuse of notation) $\mathbb{P}^\Xi_{1-p^\kappa}$ where, similar to the definition of $\Lambda$ in \eqref{e:def_lambda}, $\Xi$ is the corresponding sequence whose increments are given by the $\zeta_i$'s, that is,
\begin{equation}
\label{e:def_xi}
\Xi:=\big\{x_k\in \mathbb{R};\,  x_0=-\tfrac{1}{2} \textrm{ and }  x_k=x_{k-1}+\zeta_k\textrm{ for }k\in\mathbb{Z}_+^* \big\}.
\end{equation}

Now the fact that the $\zeta_i$'s are positive, integer-valued random variables and that the moment condition \eqref{e:moment_zeta} holds enable us to find the corresponding constants $\varepsilon$, $\alpha$, $\gamma$ and $\mu$ along with the corresponding scale sequences $L_k(\zeta_1, \varepsilon, \alpha, \gamma)$ and $H_k(L_0, \gamma, \mu)$.
Having $L_k$ and $H_k$ defined, we can also define the crossing events $C^{k,\star}_{i,1}$ and $D^{k,\star}_{i,1}$ defined analogously as in \eqref{defC} and \eqref{defD}, only that they take place in $(\mathbb{Z}^2_+)^\star$ instead of $\mathbb{Z}^2_+$.
So we can now use Lemmas \ref{lmult1} and \ref{l3} to obtain that, if $p$ is sufficiently small and consequently $1-p^\kappa$ is sufficiently large,
\begin{equation}
\label{e:dual_cross_io}
\mathbb{P}^{\Xi}_{1-p^\kappa}\big( C^{k,\star}_{i,1} \cap D^{k,\star}_{i,1} \big) \geq 1-2q_{k}(1-p^\kappa) \geq 1-2\exp(-L_k^{\beta}) \to 1,
\end{equation}
as $k$ goes to infinity.
It follows that, almost surely, the origin $o$ in $\mathbb{Z}^2_+$ is encompassed by an open dual semicircuit linking the leftmost vertical and the lowermost horizontal semiaxis of $(\mathbb{Z}^2_+)^\star$.
This semicircuit acts as a blocking structure for $o$ to percolate in the original lattice.
Therefore, $\{o \leftrightarrow \infty\}$ has null probability for the model given by \eqref{e:weigths_enhanced}.
This concludes the proof.
\end{proof}

\subsection{Proof of Theorem  \ref{t3_2}}\label{demt3_2}
We now present a proof of Theorem \ref{t3_2}.
Rather than using the multiscale developed in Section \ref{s:multiscale} we  explore the fact that consecutive columns will be typically very far apart to apply a Borel-Cantelli argument showing that long crossings cannot occur, hence no infinite cluster can exist.

\begin{proof}[Proof of Theorem  \ref{t3_2}]

Let $\eta<1$ be such that $\mathbb{E}(\xi^\eta)=\infty$ and set $\epsilon>0$ such that $\eta^{-1}=1+2\epsilon$.
Since $\{\xi_i\}_{i\in\mathbb{Z}_+^*}$ are independent copies of $\xi$, we have
\begin{eqnarray}
\sum_{i=0}\mathbb{P}(\xi_i>i^{1+2\epsilon}) 
=\sum_{i=0} \mathbb{P}(\xi>i^{1+2\epsilon})
= \sum_{i=0} \mathbb{P}( \xi^\eta>n)=\infty.
\label{t3_2.1}
\end{eqnarray}
Therefore, by Borel-Cantelli, $\upsilon_\xi (\xi_i>i^{1+2\epsilon} \text{ i.o.})=1$. 

Let us now fix $\Lambda\in\{\xi_i>i^{1+2\epsilon} \text{ i.o.}\}$ and $p \in (0,1)$.
It is sufficient to show that $\mathbb{P}^\Lambda_p(o\leftrightarrow \infty)=0$.  
Pick an increasing subsequence $i_k=i_k(\Lambda), k\in\mathbb{Z}_+$ such that $\xi_{i_k}>{i_k}^{1+2\epsilon}$ for every $k$.
Roughly speaking, as we will see below, the vertical columns of $\mathcal{L}_\Lambda$ that project to $x_{i_{k-1}}$ and $x_{i_{k}}$ are too distant from each other to allow for the existence of paths that connect between them.
We choose however, to use the equivalent formulation of the percolation model on $\mathbb{Z}^2_+$ with parameters $p_e$ as given in \eqref{ipmsl}.

Recall the notation for horizontal and vertical crossings events in rectangles introduced in (\ref{rectangle}), (\ref{crossingh}) and (\ref{crossingv}).
For each $k\in\mathbb{Z}_+$ let
\[
R_k:=R\Big(\big[0, {i_k}\big)\times \big[0, \lceil \exp(i_k^{1+\epsilon})\rceil\big)\Big),
\]
and notice that
\begin{eqnarray}\mathbb{P}^\Lambda_p(o\leftrightarrow\infty)\leq \mathbb{P}^\Lambda_p\big(\mathcal{C}_h(R_k)\big)+\mathbb{P}^\Lambda_p\big(\mathcal{C}_v(R_k)\big).\label{t2.3}\end{eqnarray}
The probability of $\mathcal{C}_h(R_k)$ is bounded above by the probability that there is an open edge between the columns $\{{i_k}-1\}\times \mathbb{Z}_+$ and  $\{{i_k}\}\times \mathbb{Z}_+$. 
Since $\xi_{i_k}$ is large, the height of $R_k$ is not enough to ensure that such an edge exists with good probability.
In fact, let
\[
J_k=\big\{0, 1, \ldots, \lceil \exp(i_k^{1+\epsilon})\rceil-1\big\}
\]
and note that
\begin{eqnarray}
\mathbb{P}^\Lambda_p\big(\mathcal{C}_h(R_k)\big)
&\leq&\mathbb{P}^\Lambda_p\bigg(\bigcup_{j\in J_k}\big\{\{({i_k-1},j), ({i_k}, j)\} \textrm{ is open}\big\}\bigg) \nonumber\\
&\leq& \lceil \exp(i_k^{1+\epsilon})\rceil p^{\xi_{i_k}}\nonumber\\ 
&\leq& \lceil\exp(i_k^{1+\epsilon})\rceil \exp(i_k^{1+2\epsilon} \ln p)\stackrel{k\to \infty}{\longrightarrow} 0. \label{t2.4}
\end{eqnarray}

In order to bound the probability of $\mathcal{C}_v(R_k)$, we note that, on this event there must be at least one vertical edge connecting the $j$-th and $(j+1)$-th row in $R_k$, for every $j\in J_k$. 
This time, the height of $R_k$ is too large for these edges to exist with good probability.
In fact,
\begin{eqnarray}
\mathbb{P}^\Lambda_p\big(\mathcal{C}_v(R_k)\big)&\leq&\mathbb{P}^\Lambda_p\bigg(\bigcap_{j\in J_k}\bigcup_{l=0}^{i_k-1}\big\{\{({l}, j),({l}, j+1)\} \textrm{ is open}\big\}\bigg) \nonumber\\
&=&\Big(1-\big(1-p\big)^{i_k}\Big)^{\abs{J_k}}\nonumber\\
&=&\big(1-\exp(i_k \ln(1-p))\big)^{\abs{J_k}}\nonumber\\
&\leq& \exp\big(-\abs{J_k}\exp(i_k \ln(1-p))\big) \nonumber\\
&\leq& \exp\big( -\exp(i_k\ln(1-p)+i_k^{1+\epsilon})\big)\stackrel{k\to \infty}{\longrightarrow} 0. \label{t2.5}
\end{eqnarray}

Combining (\ref{t2.3}), (\ref{t2.4}) and (\ref{t2.5}), we get 
$\mathbb{P}^\Lambda_p\big(o\leftrightarrow\infty\big)=0,$ which concludes the proof.
\end{proof}

\end{document}